\newtheorem{thm}{Theorem}[section] 
\newtheorem{lem}[thm]{Lemma}     
\newtheorem{cor}[thm]{Corollary}
\newtheorem{prop}[thm]{Proposition}
\newtheorem{definition}{Definition}
\newtheorem{remark}{Remark}
\newtheorem{ex}{Example}
\begin{document}
\title[A property of ergodic flows] {A property of ergodic flows} 

\author[Maria Joi\c{t}a and Radu-B. Munteanu]{Maria Joi\c{t}a and Radu-B. Munteanu  }
\address{Maria Joi\c{t}a, Department of Mathematics,University of Bucharest,14 Academiei St.,
010014,Bucharest,Romania}
\email{joita@fmi.unibuc.ro }

\address{Radu-B. Munteanu, Department of Mathematics, Univ. of Bucharest,  14 Academiei St.,
010014,Bucharest,Romania}
\email{radu-bogdan.munteanu@g.unibuc.ro}

\date{}



\subjclass[2010]{37A20 (primary), 37A40, 37A35, 46L10 (secondary).}

\footnote{This work was supported by a grant of the Romanian
Ministry of Education, CNCS - UEFISCDI, project number
PN-II-RU-PD-2012-3-0533.}

\begin{abstract}
In this paper we introduce a property of ergodic flows, called
Property B. We prove that any ergodic hyperfinite equivalence
relation of type III$_{0}$ whose associated flow satisfies this
property is not of product type. A consequence of this result is
that any properly ergodic flow with Property B is not approximately
transitive. We use Property B to construct a non-AT flow which - up
to conjugacy - is a flow built under a function with the dyadic
odometer as base automorphism.
\end{abstract}
\maketitle
\section{Introduction}\label{intro}

A remarkable result of Krieger \cite{K4} establishes a complete
correspondence between orbit equivalence classes of ergodic
hyperfinite equivalence relations of type III$_{0}$, conjugacy
classes of properly ergodic flows and isomorphism classes of
approximately finite dimensional factors of type III$_{0}$. Product
type equivalence relations are hyperfinite equivalence relations,
which, up to orbit equivalence, are generated by product type
odometers. In order to show that there exist ergodic non-singular
automorphisms not orbit equivalent to any product type odometer,
Krieger \cite{K2} introduced a property of non-singular
automorphisms, called Property A. He proved that any product type
odometer satisfies this property \cite{K3}, and he also constructed
an ergodic non-singular automorphism that does not have this
property, and therefore is not of product type. It was shown in
\cite{M2} that there exist non singular automorphisms which satisfy
Property A but which are not of product type.

To characterize the
ITPFI factors among all approximately finite dimensional factors,
Connes and Woods \cite{CW} introduced a property of ergodic actions,
called approximate transitivity, shortly AT.  They showed that an
approximately finite dimensional factor of type III$_{0}$ is an
ITPFI factor if and only if its flow of weights is AT. Equivalently,
their result says that an ergodic hyperfinite equivalence relation
$\mathcal{R}$ of type III$_{0}$ is of product type if and only if
the associated flow of $\mathcal{R}$ is AT.


In this paper we introduce a property of ergodic flows, called
Property B. We show that any properly ergodic flow with this
property is not AT and we construct a flow which has this property.
The non AT flow corresponding to the non ITPFI factor constructed in
\cite{GH} does not have Property B, and so the property of a flow to
be not AT is not equivalent to Property B.


The paper is organized as follows. In Section 2, we recall some
notations and definitions. In Section 3 we define Property B, we
show that this property is invariant for conjugacy of flows  and we
characterize this property for a flow built under a function. In
Section 4 we prove that a hyperfinite ergodic equivalence relation $\mathcal{R}$ of type III$_{0}$
whose associated flow satisfies Property B is not of product type and we show that a properly ergodic flow which has Property B is not AT.
In Section 5, we show that there exists a flow whith Property B. This flow is built
under a function with the dyadic odometer as base automorphism.

\bigskip

\section{Preliminaries}
Throughout this paper $(X,\mathfrak{B},\mu)$ will be a standard
$\sigma$-finite measure space. A measurable flow on
$(X,\mathfrak{B},\mu)$ is a one parameter group of non-singular
automorphisms $\{F_{t}\}_{t\in\mathbb{R}}$ of $(X,\mathfrak{B},\mu)$
such that the mapping $X\times\mathbb{R}\ni (x,t)\mapsto F_{t}(x)\in X$
is measurable. Two flows $\{F_{t}\}_{t\in\mathbb{R}}$ and
$\{F'_{t}\}_{t\in\mathbb{R}}$ on $(X,\mathfrak{B},\mu)$ and
$(X',\mathfrak{B}',\mu')$ respectively, are conjugate if there
exists an isomorphism
$T:(X,\mathfrak{B},\mu)\rightarrow(X',\mathfrak{B}',\mu')$ such that
for all $t\in \mathbb{R}$ and for $\mu$-almost all $x\in X$,
$F'_{t}(T(x))=T(F_{t}(x))$. We say that $\{F_{t}\}_{t\in\mathbb{R}}$ is
ergodic if any $F_{t}$-invariant measurable set is either null or
conull.


Let $\mathcal{R}$ be an equivalence relation on
$(X,\mathfrak{B},\mu)$. We say that $\mathcal{R}$ is a countable
measured equivalence relation if the equivalence classes
$\mathcal{R}(x)$, $x\in X$ are countable, $\mathcal{R}$ is a
measurable subset of $X\times X$, and the saturation of any set of
measure zero has measure zero. $\mathcal{R}$ is called ergodic if
any invariant set is either null or conull. Recall that if $\nu_{l}$
and $\nu_{r}$ are the left and the right counting measures on
$\mathcal{R}$ we have that ${\nu_{l}}\sim\nu_{r}$ and $\delta(x,y)=
\frac{d\nu_{l}}{d\nu_{r}}(x,y)$ is the Radon-Nikodym cocycle of
$\mu$ with respect to $\mathcal{R}$. We say that the measure $\mu$
is lacunary if there exist $\varepsilon>0$ such that $\delta(x,y)=0$
or $|\delta(x,y)|>\epsilon$, for $(x,y)\in\mathcal{R}$. The full
group $[\mathcal{R}]$ of $\mathcal{R}$ is the group of all
nonsingular automorphisms $V$ of $(X,\mathfrak{B},\mu)$ with
$(x,Vx)\in \mathcal{R}$ for $\mu$-a.e. $x\in X$.

A countable measured equivalence relation $\mathcal{R}$ is called
finite if $\mathcal{R}(x)$ are finite for almost all $x\in X$. We
say that $\mathcal{R}$ is hyperfinite, if there are finite relations
$\mathcal{R}_{n}$ with $\mathcal{R}_{n}\subseteq \mathcal{R}_{n+1}$
and $\cup\mathcal{R}_{n}=\mathcal{R}$, up to a set of measure zero.
We recall that $\mathcal{R}$ is hyperfinite if and only if if there
exists a nonsingular automorphism $T$ on $(X,\mathcal{B},\mu)$ such
that, up to a set of measure zero, $\mathcal{R}$ is equal to the
equivalence relation $\mathcal{R}_{T}=\{(x,T^{n}x),x\in X, n\in
\mathbb{Z}\}$ generated by $T$, that is, $\mathcal{R}(x)=\{T^{n}x,
n\in\mathbb{Z}\}$, for $\mu$-a.e. $x\in X$.


Two countable measured equivalence relations $\mathcal{R}$ and
$\mathcal{R}'$ on $(X,\mathfrak{B},\mu)$ and $(X',\mathfrak{B}'$,
$\mu')$ respectively, are called orbit equivalent if there exists an
isomorphism $S:(X,\mathfrak{B},\mu)\rightarrow
(X',\mathfrak{B}',\mu')$, such that
$S(\mathcal{R}(x))=\mathcal{R}'(Sx)$ for $\mu$-a.e. $x\in X$.

Let $(k_{n})_{n\geq 1}$ be a sequence of positive integers, with
$k_{n}\geq 2$. Consider the infinite product probability space
$(X,\mu)=\prod_{n=1}^{\infty}(X_{n},\mu_{n})$, where
$X_{n}=\{0,1,\ldots k_{n}-1\}$ and $\mu_{n}$ are probability
measures on $X_{n}$ such that $\mu_{n}(x)>0$, for all $x\in X_{n}$.
We recall that the tail equivalence relation $\mathcal{T}$ on
$(X,\mu)$ is defined for $x=(x_{n})_{n\geq 1}$ and $y=(y_{n})_{n\geq
1}$ by
\[(x,y)\in \mathcal{T} \text{ iff there exists }n\geq 1
\text{ such that }x_{i}=y_{i}\text{ for all}i>n.\] It easily can be
observed that, up to a set of measure zero, $\mathcal{T}$ is
generated by the odometer defined on $(X,\mu)$. A countable measured
equivalence relation is said of product type if it is orbit
equivalent to the tail equivalence relation on an infinite product
probability space as above, or equivalently, if it is orbit
equivalent to the equivalence relation generated by a product type
odometer.

An ergodic equivalence relation $\mathcal{R}$ is of type III if
there is no $\sigma$-finite $\mathcal{R}$-invariant measure $\nu$
equivalent to $\mu$. The type III equivalence relations are further
classified in subtypes III$_{\lambda}$, where $0\leq \lambda\leq 1$.
Up to orbit equivalence, for $\lambda\neq 0$, there is only one
hyperfinite equivalence of type III$_{\lambda}$, and this is of
product type.

The orbit equivalence classes of ergodic hyperfinite equivalence
relations of type III$_{0}$ are completely classified by the
conjugacy class of their associated flow.
For more details we refer the reader to \cite{FM} and \cite{KS}.

In order to show that there exists ergodic non-singular
automorphisms not orbit equivalent to any product odometer, Krieger
introduced a property of non-singular automorphisms, called Property
A. This property can be defined for equivalence relations (see
\cite{M1}), as follows. Suppose that $\mathcal{R}$ is a hyperfinite
equivalence relation on $(X,\mathfrak{B},\mu)$. Let $\nu$  be a
$\sigma$-finite measure on $X$, equivalent to $\mu$, and
$\delta_{\nu}$ the corresponding Radon-Nicodym cocycle. For $x\in
A$, define
\[\Lambda_{\nu,A,\mathcal{R}}(x)=\{\log\delta_{\nu}(y,x):
(x,y)\in\mathcal{R}\text{ and }y\in
A\}\]\[=\{\log\frac{d\nu\circ\phi}{d\nu}(x): \phi\in [\mathcal{R}],\
(x,\phi(x))\in\mathcal{R}\text{ and }\phi(x)\in A\}.\]For a
$\sigma$-finite measure $\nu\sim\mu$, $A\in\mathcal{B}$ of positive
measure and $s,\zeta > 0$, set
\begin{eqnarray*}
K_{\nu,\mathcal{R}}(A, s,\zeta)= \{x\in A: (e^{s-\zeta},
e^{s+\zeta})\cap\Lambda_{\nu,A,\mathcal{R}}(x)\neq \emptyset\}\cup \\
\{x\in A: (-e^{s+\zeta}, -e^{s-\zeta})\cap
\Lambda_{\nu,A,\mathcal{R}}(x)\neq \emptyset\}.
\end{eqnarray*}
\begin{definition}
Let $\mathcal{R}$ be a hyperfinite equivalence relation on
$(X,\mathfrak{B},\mu)$. Then $\mathcal{R}$ has Property A if there
exists a $\sigma$-measure $\nu\sim\mu$ and $\eta,\zeta >0$ such
that: every set $A\in \mathfrak{B}$ of positive measure contains a
set $B\in\mathfrak{B}$ of positive measure such that
\[\limsup_{s\rightarrow\infty}K_{\nu,\mathcal{R}}(B,s,\zeta)>\eta\cdot\nu(B).\]
\end{definition}
If $\mathcal{R}$ is a hyperfinite equivalence relation and $T$ is  a
non-singular automorphism such that $\mathcal{R}=\mathcal{R}_{T}$,
up to a null set, it easily can be observed that $\mathcal{R}$ has
Property A if and only if $T$ has Property A (see \cite{M2}). We
mention the following result (see  \cite{K3} and \cite{M2}) that
will be used in this paper.
\begin{prop}\label{alfa}
Assume that $\mathcal{R}$ has Property A. Then there exist
$\eta,\delta>0$ such that for all $\lambda\sim\mu$ and all
$\epsilon>0$, every measurable set $A$ of positive measure contains
a measurable set $B$ of positive measure with
\[\limsup_{s\rightarrow\infty}K_{\lambda,\mathcal{R}}(B,s,\delta+\epsilon)>e^{-\epsilon}\eta\cdot\lambda(B).\]
\end{prop}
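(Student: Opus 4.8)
The plan is to bootstrap from the single measure $\nu$ furnished by Property A to an arbitrary $\lambda\sim\mu$ by a change-of-measure argument, exploiting that the logarithmic Radon--Nikodym cocycles for $\nu$ and $\lambda$ differ by a coboundary. Writing $h=\frac{d\lambda}{d\nu}$, which is strictly positive and finite $\mu$-a.e. since $\lambda\sim\nu\sim\mu$, the chain rule for nonsingular maps gives, for $(x,y)\in\mathcal{R}$ with $y=\phi(x)$,
\[
\log\delta_{\lambda}(y,x)=\log\delta_{\nu}(y,x)+\log h(y)-\log h(x).
\]
Thus the log-ratios attached to a point through $\nu$ and through $\lambda$ differ only by the difference of the values of $\log h$ at the two endpoints of the relation.

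The key device is to invoke Property A not on the given set $A$ but on a subset where $\log h$ is nearly constant. First I would fix $\eta,\zeta>0$ as in the definition of Property A for $\nu$ and set $\delta=\zeta$. Given $\lambda$ and $\epsilon>0$, partition $X$ into the countably many level sets $X_{k}=\{x:\log h(x)\in[k\epsilon,(k+1)\epsilon)\}$, $k\in\mathbb{Z}$. Since $A$ has positive measure, some $A'=A\cap X_{k_{0}}$ has positive measure; on $A'$ the oscillation of $\log h$ is less than $\epsilon$ and $h\in[e^{k_{0}\epsilon},e^{(k_{0}+1)\epsilon})$. Applying Property A for $\nu$ to $A'$ produces a positive-measure $B\subseteq A'$ with $\limsup_{s\to\infty}\nu(K_{\nu,\mathcal{R}}(B,s,\zeta))>\eta\,\nu(B)$. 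For $x,y\in B\subseteq A'$ we have $|\log h(y)-\log h(x)|<\epsilon$, so by the coboundary identity every log-ratio lying in the window defining $K_{\nu,\mathcal{R}}(B,s,\zeta)$ is displaced by at most $\epsilon$ and therefore remains in the window of half-width $\delta+\epsilon=\zeta+\epsilon$. Both the positive and negative windows shift identically, so this yields the set inclusion $K_{\nu,\mathcal{R}}(B,s,\zeta)\subseteq K_{\lambda,\mathcal{R}}(B,s,\delta+\epsilon)$.

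Finally I would convert $\nu$-measures into $\lambda$-measures using $\lambda(E)=\int_{E}h\,d\nu$ together with the pinching $e^{k_{0}\epsilon}\le h<e^{(k_{0}+1)\epsilon}$ on $B$, which gives $\lambda(K)\ge e^{k_{0}\epsilon}\nu(K)$ for $K\subseteq B$ and $\nu(B)\ge e^{-(k_{0}+1)\epsilon}\lambda(B)$. Combining these with the inclusion above and passing to $\limsup_{s}$,
\begin{align*}
\limsup_{s\to\infty}\lambda\bigl(K_{\lambda,\mathcal{R}}(B,s,\delta+\epsilon)\bigr)
&\ge e^{k_{0}\epsilon}\limsup_{s\to\infty}\nu\bigl(K_{\nu,\mathcal{R}}(B,s,\zeta)\bigr)\\
&> e^{k_{0}\epsilon}\eta\,\nu(B)\ge e^{-\epsilon}\eta\,\lambda(B),
\end{align*}
which is exactly the assertion, with $\delta=\zeta$ and the same $\eta$. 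I expect no deep obstacle; the only delicate point is the \emph{order of operations}---one must restrict to the level set $A'$ \emph{before} invoking Property A, so that the coboundary $\log h(y)-\log h(x)$ is uniformly small on the eventual set $B$---together with the bookkeeping that makes the window widen by exactly $\epsilon$ and the threshold contract by exactly the factor $e^{-\epsilon}$.
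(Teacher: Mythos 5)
Your argument is correct, but note that there is nothing in the paper to compare it against: Proposition \ref{alfa} is stated without proof and attributed to \cite{K3} and \cite{M2}, so your change-of-measure proof must be judged on its own merits, and it holds up. The three ingredients --- the coboundary identity $\log\delta_{\lambda}(y,x)=\log\delta_{\nu}(y,x)+\log h(y)-\log h(x)$, the restriction to a level set $X_{k_{0}}$ of $\log h$ \emph{before} invoking Property A (so that both endpoints $x,y\in B$ satisfy $|\log h(y)-\log h(x)|<\epsilon$, which is exactly why $\Lambda_{\nu,B,\mathcal{R}}$ only involving pairs inside $B$ matters), and the pinching $e^{k_{0}\epsilon}\le h<e^{(k_{0}+1)\epsilon}$ on $B$ --- combine precisely as you wrote, giving the conclusion with $\delta=\zeta$ and the same $\eta$, matching the quantifier structure of the statement (one pair $\eta,\delta$ for all $\lambda\sim\mu$ and all $\epsilon>0$). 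The one step you should make explicit is that the inclusion $K_{\nu,\mathcal{R}}(B,s,\zeta)\subseteq K_{\lambda,\mathcal{R}}(B,s,\zeta+\epsilon)$ holds only for $s$ sufficiently large: in this paper the windows are the intervals $(e^{s-\zeta},e^{s+\zeta})$ and $(-e^{s+\zeta},-e^{s-\zeta})$ for the \emph{value} of the log-cocycle, so widening $\zeta$ to $\zeta+\epsilon$ enlarges the window at its inner endpoint by $e^{s-\zeta}\bigl(1-e^{-\epsilon}\bigr)$, which absorbs the additive displacement $\epsilon$ only once $e^{s-\zeta}\bigl(1-e^{-\epsilon}\bigr)\ge\epsilon$; your phrase ``remains in the window of half-width $\zeta+\epsilon$'' treats the widening as if it were additive in the exponent, which it is not. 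Since the conclusion is a $\limsup_{s\to\infty}$ statement, the restriction to large $s$ is harmless, and no other repair is needed.
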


\noindent We recall that Krieger's result from \cite{K3}, can be
reformulated in the following way \cite{M2}:
\begin{thm}\label{kra}
Any ergodic equivalence relation of product type and of type III has
Property A.
\end{thm}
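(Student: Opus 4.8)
The plan is to verify the definition directly for the model relation and to let the product structure supply the required uniformity through a law of large numbers. Since Property A depends only on the Radon--Nikodym cocycle of $\mathcal{R}$ and is preserved under orbit equivalence (the special case $\mathcal{R}=\mathcal{R}_T$ is already noted above), I may assume $\mathcal{R}=\mathcal{T}$ is the tail relation on a product space $(X,\mu)=\prod_{n\ge 1}(X_n,\mu_n)$, and I take $\nu=\mu$. For $(x,y)\in\mathcal{T}$ with $x_i=y_i$ for $i>N$ the cocycle is the finite sum
\[
\log\delta_{\mu}(y,x)=\sum_{i=1}^{N}\log\frac{\mu_i(x_i)}{\mu_i(y_i)},
\]
so $\Lambda_{\mu,A,\mathcal{T}}(x)$ consists of sums of the bounded per-coordinate scores $\log(\mu_i(a)/\mu_i(b))$, subject to the altered point $y$ remaining in $A$.

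First I would replace an arbitrary set $A$ of positive measure by a cylinder. By the martingale (density) theorem for the product measure, $A$ contains a subset $B$ of positive measure filling a proportion $>1-\varepsilon$ of a single cylinder $[a_1\cdots a_m]$; since the target inequality carries $\nu(B)$ on its right-hand side, it suffices to establish $\nu\big(K_{\mu,\mathcal{T}}(B,s,\zeta)\big)>\eta\,\nu(B)$ for such a $B$, where the coordinates beyond $m$ may be modified freely without leaving the cylinder.

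For the core estimate I would alter only coordinates in a block $m<i\le M$. For fixed $x$ the largest score obtainable from this block is $\sum_{m<i\le M}\log\big(\mu_i(x_i)/\min_b\mu_i(b)\big)$, a sum of independent nonnegative bounded functions of $x$; because $\mathcal{T}$ is of type III its cocycle is unbounded, which forces $\sum_i \mathbb{E}\big[\log(\mu_i(x_i)/\min_b\mu_i(b))\big]=\infty$. Hence, by Chebyshev/the law of large numbers, this maximum concentrates, for a proportion of $x$ tending to $1$ as $M\to\infty$, around a deterministic value $V(M)$ with $V(M)\to\infty$; crucially $V(M)$ is the \emph{same} for almost every $x$, which aligns the relevant scale $s$ across $B$. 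The achievable scores increase from $0$ up to this maximum in steps bounded by $C=\sup_i\max_{a,b}|\log(\mu_i(a)/\mu_i(b))|$, so for any fixed $\zeta>0$ the window $(e^{s-\zeta},e^{s+\zeta})$, whose width $e^{s}(e^{\zeta}-e^{-\zeta})$ diverges with $s$, must contain an achievable value once it overlaps $[0,V(M)]$ in a segment of length exceeding $C$. Taking $e^{s_M}\approx\tfrac12 V(M)$ yields a sequence $s_M\to\infty$ realizing the $\limsup$, with the scale placed safely inside $[0,V(M)]$.

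The main obstacle is that the value landing in the window is produced by a \emph{specific} modification of coordinates, so the partner $y$ must be shown to return to $B$ rather than to the exceptional $\varepsilon$-fraction of the cylinder. A naive random partner fails, since the window sits at a large-deviation scale of low probability; instead I would exploit the abundance of partners, observing that a value in the window is attained by a large combinatorial family of distinct change-patterns in $(m,M]$, and comparing the total conditional measure of this family with the measure $\varepsilon$ of the bad part to select one pattern with $y\in B$. Making this selection uniform in $x$ and combining it with the concentration estimate is the delicate quantitative heart of the argument; it is precisely here, through the divergence $V(M)\to\infty$, that the type III hypothesis is indispensable, for a bounded cocycle would render the windows around $e^{s}$ unreachable.
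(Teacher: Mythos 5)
You should first be aware that the paper contains no proof of this theorem: it is quoted as a reformulation (via \cite{M2}) of Krieger's result, whose proof lives in the erratum \cite{K3}. So your attempt has to stand on its own, and as written it has a genuine gap. The whole mechanism of your third paragraph rests on the finiteness of $C=\sup_i\max_{a,b}|\log(\mu_i(a)/\mu_i(b))|$, which you use twice: once so that the achievable subset-sums are $C$-dense in $[0,V(M)]$ (guaranteeing the window around $e^{s}$ is hit), and once, implicitly, so that Chebyshev gives concentration of the maximal score around a deterministic $V(M)$. Nothing in the hypotheses gives $C<\infty$: type III product odometers typically have unbounded per-coordinate ratios (e.g.\ ITPFI$_{2}$ examples with $\mu_n(1)=\lambda_n\to 0$, including the type III$_0$ ones whose flows are computed in \cite{HO}). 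When $D_i=\max_{a,b}\log(\mu_i(a)/\mu_i(b))$ is unbounded, the set of achievable cocycle values can have gaps of unbounded length, so a window of fixed logarithmic width $\zeta$ may be missed at every large scale by your construction; and the sums $\sum_{m<i\le M}\log(\mu_i(x_i)/\min_b\mu_i(b))$ need not concentrate around any deterministic $V(M)$ --- absence of exactly this kind of homogeneity is the hallmark of type III$_0$, which is the hard case of the theorem. In effect your argument covers only a bounded, homogeneous regime close to III$_\lambda$, $\lambda>0$.

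The second problem is that the step you yourself call ``the delicate quantitative heart'' --- selecting, uniformly in $x$ and at a common scale $s$, one change-pattern whose partner $y$ lands back in $B$ --- is precisely where the theorem lives, and it cannot be settled by comparing ``the total conditional measure of the family'' with $\varepsilon$: the partner map realizing a score near $e^{s}$ has Radon--Nikodym derivative of order $e^{\pm e^{s}}$, so the preimage of the exceptional $\varepsilon$-fraction of the cylinder can have measure vastly exceeding $\varepsilon$, and naive mass comparisons fail. (It is exactly to control this mass transport that the definition of $K_{\nu,\mathcal{R}}$ admits both the positive window $(e^{s-\zeta},e^{s+\zeta})$ and the negative one $(-e^{s+\zeta},-e^{s-\zeta})$, a symmetry your sketch never exploits.) One subsidiary claim of yours is true but needs a different justification than ``the cocycle is unbounded'': if $\sum_i\mathbb{E}\bigl[\log(\mu_i(x_i)/\min_b\mu_i(b))\bigr]<\infty$, then $h(x)=\sum_i\log\bigl(\mu_i(x_i)/\min_b\mu_i(b)\bigr)$ is finite a.e., whence $\log\delta_\mu(y,x)=h(y)-h(x)$ is a coboundary and $e^{-h}\,d\mu$ is an equivalent $\sigma$-finite invariant measure, contradicting type III; this repairs that step, but the density-of-values, concentration, and return-to-$B$ steps remain unproved in the general III$_0$ case, which is the actual content of Krieger's theorem.
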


\section{Property B}
In this section we define a property of measurable flows that we
call Property B, we show that this is an invariant for conjugacy of
flows, and we characterize this property for a flow built under a
function.

Let $\{F_{t}\}_{t\in\mathbb{R}}$ be a flow of automorphisms of
$(X,\mathfrak{B},\mu)$. For $A\in\mathfrak{B}$ of positive measure
and $s,\delta>0$ we define
\begin{align*}
\Lambda_{F,\delta,s}(A)=\{x\in A, \ \exists t\in
(e^{s-\delta},e^{s+\delta})\cup(-e^{s+\delta},-e^{s-\delta}),
F_{t}(x)\in A\}.
\end{align*}
\begin{definition}
We say that $\{F_{t}\}_{t\in\mathbb{R}}$ has Property B if there exists
a measurable set $A\subseteq X$ of positive measure such that for
all $\delta>0$
\begin{equation}\label{propR}
\limsup_{s\rightarrow\infty}\mu(\Lambda_{F,\delta,s}(A))=0.
\end{equation}
\end{definition}
\begin{prop}\label{2x}
Let $\{F_{t}\}_{t\in\mathbb{R}}$ be a flow on $(X,\mathfrak{B},\mu)$
satisfying Property B, and $\mu'$ a $\sigma$-finite measure
equivalent to $\mu$. Then, the flow $\{F_{t}\}_{t\in\mathbb{R}}$ on
$(X,\mathfrak{B},\mu')$ has Property B.
\end{prop}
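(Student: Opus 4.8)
The plan is to exploit the observation that the set $\Lambda_{F,\delta,s}(A)$ depends only on the flow $\{F_{t}\}_{t\in\mathbb{R}}$ and on the set $A$, and \emph{not} on the ambient measure; the measure enters solely through the scalar quantity $\mu(\Lambda_{F,\delta,s}(A))$. Hence, if $A$ witnesses Property B for $\mu$, the entire task reduces to showing that $\limsup_{s\to\infty}\mu(\Lambda_{F,\delta,s}(A))=0$ forces the analogous vanishing for $\mu'$, possibly after replacing $A$ by a suitable subset.

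The first step is a monotonicity remark. If $B\subseteq A$ has positive measure, then $\Lambda_{F,\delta,s}(B)\subseteq\Lambda_{F,\delta,s}(A)$, because $x\in B$ with $F_{t}(x)\in B$ forces both $x\in A$ and $F_{t}(x)\in A$. Consequently every positive-measure subset of a witnessing set is itself a witnessing set for the same measure. Invoking the $\sigma$-finiteness of $\mu'$, I would choose a subset $B\subseteq A$ of positive measure with $\mu'(B)<\infty$; by the monotonicity just noted, $B$ still satisfies $\limsup_{s\to\infty}\mu(\Lambda_{F,\delta,s}(B))=0$ for every $\delta>0$.

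The second step carries out the change of measure on $B$. Writing $f=d\mu'/d\mu$, the finiteness $\mu'(B)=\int_{B}f\,d\mu<\infty$ makes $f$ integrable over $B$, so that $\int_{B\cap\{f>M\}}f\,d\mu\to0$ as $M\to\infty$. Setting $\Lambda_{s}:=\Lambda_{F,\delta,s}(B)\subseteq B$ one estimates
\[
\mu'(\Lambda_{s})=\int_{\Lambda_{s}}f\,d\mu\leq M\,\mu(\Lambda_{s})+\int_{B\cap\{f>M\}}f\,d\mu .
\]
Given $\varepsilon>0$, first fix $M$ so large that the second term is below $\varepsilon$, and then take $\limsup_{s\to\infty}$; since $\limsup_{s}\mu(\Lambda_{s})=0$ the first term disappears, yielding $\limsup_{s\to\infty}\mu'(\Lambda_{s})\leq\varepsilon$. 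As $\varepsilon>0$ is arbitrary, $B$ witnesses Property B for $\mu'$, which is the assertion.

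The main obstacle is that the Radon--Nikodym derivative $f$ need not be bounded, so one cannot simply dominate $\mu'$ by a constant multiple of $\mu$ and pass the vanishing $\limsup$ through. The two devices above are precisely what resolve this: shrinking $A$ to a set of finite $\mu'$-measure (legitimate by the monotonicity step) guarantees integrability of $f$, and the truncation at level $M$ supplies the uniform-integrability control that lets the $\mu$-smallness of $\Lambda_{s}$ be transferred to $\mu'$.
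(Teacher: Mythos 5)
Your proof is correct and follows essentially the same strategy as the paper's: both pass to a positive-measure subset $B$ of the witnessing set $A$ (using the monotonicity $\Lambda_{F,\delta,s}(B)\subseteq\Lambda_{F,\delta,s}(A)$ for $B\subseteq A$) on which the Radon--Nikodym derivative $f=d\mu'/d\mu$ is controlled, and then transfer the vanishing $\limsup$ from $\mu$ to $\mu'$. The only difference is one of implementation: the paper takes $B=A\cap\{f<k\}$ so that $f$ is bounded on $B$ and the estimate $\mu'(\Lambda_{F,\delta,s}(B))\leq k\,\mu(\Lambda_{F,\delta,s}(B))$ is immediate, whereas you take $B$ of finite $\mu'$-measure and truncate $f$ at level $M$ inside the integral --- a slightly longer but equally valid uniform-integrability version of the same idea.
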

\begin{proof}Let $\mu'\sim\mu$ be a $\sigma$-finite measure equivalent to $\mu$ and denote by $f$ the Radon-Nikodym
derivative of $\mu'$ with respect to $\mu$. Thus, $\mu'(A)=\int_{A}f
d\mu$ whenever $A\in\mathfrak{B}$.

Let  $A$ a be measurable set satisfying (\ref{propR}). There exists
a positive integer $k$ such that $\mu(A\cap\{x\in X, \ f(x)<k\})>0$.
Hence, $B=A\cap\{x\in X, \  f(x)<k\}$ is a subset of $A$ of positive
measure, and then, for every $\delta>0$
\[\limsup_{s\rightarrow\infty}\mu(\Lambda_{F,\delta,s}(B))=0.\]
Since $\mu(B)>0$ and $\mu\sim\mu'$ it results that $\mu'(B)>0$.
Notice that
\[\mu'(\Lambda_{F,\delta,s}(B))=\underset{\Lambda_{F,\delta,s}(B)}{\int} f
d\mu<k\underset{\Lambda_{F,\delta,s}(B)}{\int
}d\nu=k\cdot\mu(\Lambda_{F,\delta,s}(B)).\]Consequently,
\[\limsup_{s\rightarrow\infty}\mu'(\Lambda_{F,\delta,s}(B))=0.\]
and therefore, the flow $\{F_{t}\}_{t\in\mathbb{R}}$ on
$(X,\mathfrak{B},\mu')$ has Property B.
\end{proof}

\begin{prop}\label{1x}
\noindent Let $T:(X',\mathfrak{B}',\mu')\rightarrow
(X,\mathfrak{B},\mu)$ be an isomorphism and assume that
$\mu'=\mu\circ T^{-1}$. If $\{F_{t}\}_{t\in\mathbb{R}}$ is a flow on
$(X,\mathfrak{B},\mu)$ that satisfies Property B and
$\{F'_{t}\}_{t\in\mathbb{R}}$ is a flow on $(X',\mathfrak{B}',\mu')$
such that $T( F'_{t}(x))=F_{t}(Tx)$, for all $t\in\mathbb{R}$ and for
$\mu'$-almost all $x\in X'$, then $F'_{t}$ has Property B.
\end{prop}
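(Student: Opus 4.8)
The plan is to show that Property B transfers through a conjugacy of flows. The statement says that if $\{F_t\}$ on $(X,\mathfrak{B},\mu)$ has Property B, and $T$ intertwines $\{F'_t\}$ with $\{F_t\}$ (i.e. $T(F'_t(x))=F_t(Tx)$), then $\{F'_t\}$ on $(X',\mathfrak{B}',\mu')$ has Property B. Since $\mu'=\mu\circ T^{-1}$, the map $T$ is measure-preserving in the sense that $\mu(T(E))=\mu'(E)$ for measurable $E\subseteq X'$.

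Let me think about the structure. Property B asserts the existence of a set $A\subseteq X$ of positive measure such that for every $\delta>0$, $\limsup_{s\to\infty}\mu(\Lambda_{F,\delta,s}(A))=0$. I would fix such a witnessing set $A\subseteq X$ for $\{F_t\}$, and produce a witnessing set for $\{F'_t\}$ by pulling back: set $A'=T^{-1}(A)\subseteq X'$. Since $T$ is an isomorphism and $\mu'=\mu\circ T^{-1}$, we get $\mu'(A')=\mu'(T^{-1}(A))=\mu(A)>0$, so $A'$ has positive measure.

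The heart of the argument is to verify the identity $T^{-1}\bigl(\Lambda_{F,\delta,s}(A)\bigr)=\Lambda_{F',\delta,s}(A')$, up to a null set. Unwinding the definitions: $x'\in\Lambda_{F',\delta,s}(A')$ means $x'\in A'$ and there exists $t$ in the prescribed range $(e^{s-\delta},e^{s+\delta})\cup(-e^{s+\delta},-e^{s-\delta})$ with $F'_t(x')\in A'$. Applying $T$ and using the intertwining relation, $x'\in A'$ is equivalent to $Tx'\in A$, while $F'_t(x')\in A'$ is equivalent to $T(F'_t(x'))=F_t(Tx')\in A$. Thus $x'\in\Lambda_{F',\delta,s}(A')$ holds precisely when $Tx'\in A$ and $F_t(Tx')\in A$ for some admissible $t$, i.e. when $Tx'\in\Lambda_{F,\delta,s}(A)$, which is the claim. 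From this set identity and $\mu'=\mu\circ T^{-1}$ we obtain $\mu'(\Lambda_{F',\delta,s}(A'))=\mu(\Lambda_{F,\delta,s}(A))$ for all $s,\delta$, and taking $\limsup_{s\to\infty}$ gives $0$ for each $\delta>0$, which is exactly Property B for $\{F'_t\}$.

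The main subtlety — really the only place requiring care — is that the intertwining relation $T(F'_t(x))=F_t(Tx)$ holds only for all $t$ and $\mu'$-almost all $x$, not for every $x$; and measurability of the flow is needed so that the exceptional null sets can be handled uniformly. The clean way to manage this is to invoke a standard Fubini-type argument: since the set of $(x,t)$ where the intertwining fails is null for the product of $\mu'$ and Lebesgue measure on the relevant interval, the set identity above holds up to a $\mu'$-null set, which suffices because Property B is a statement about measures. I would state this carefully but not belabor the null-set bookkeeping, since once the set identity is established modulo null sets, the measure equality and the passage to the $\limsup$ are immediate.
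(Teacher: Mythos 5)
Your proof is correct and follows essentially the same route as the paper's: pull back the witnessing set via $T^{-1}$, verify the identity $\Lambda_{F',\delta,s}(T^{-1}(A))=T^{-1}(\Lambda_{F,\delta,s}(A))$ up to null sets using the intertwining relation, and transfer the measure equality via $\mu'=\mu\circ T^{-1}$ to conclude the $\limsup$ vanishes. The only difference is cosmetic: the paper simply asserts the set equalities ``up to sets of measure zero,'' while you additionally sketch the Fubini-type justification for the exceptional null sets, which is a reasonable bit of extra care rather than a divergence in method.
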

\begin{proof}
Assume that there exists a measurable subset $A$ of $X$ of positive
measure which satisfies (\ref{propR}). Let $\delta,s>0$. Up to sets
of measure zero, the following equalities hold:
\begin{align*}
\Lambda&_{F',\delta,s}(T^{-1}(A))\\
=&\{x\in T^{-1}(A), \ \exists \
t\in(e^{s-\delta},e^{s+\delta})\cup (-e^{s+\delta},-e^{s-\delta}), F'_{t}(x)\in T^{-1}(A)\}\\
=&\{x\in X', \ Tx\in A, \ \exists \
t\in(e^{s-\delta},e^{s+\delta})\cup (-e^{s+\delta},-e^{s-\delta}), T(F'_{t}(x))\in A\}\\
=&\{x\in X',\ Tx\in A, \ \exists \
t\in(e^{s-\delta},e^{s+\delta})\cup (-e^{s+\delta},-e^{s-\delta}), F_{t}(Tx)\in A\}\\
=&T^{-1}\left(\{y\in A, \ \exists \
t\in(e^{s-\delta},e^{s+\delta})\cup (-e^{s+\delta},-e^{s-\delta}), F_{t}(y)\in A\}\right.\\
=&T^{-1}(\Lambda_{F,\delta,s}(A)).
\end{align*}
Hence,
\begin{equation*}
\mu'(\Lambda_{F',\delta,s}(T^{-1}(A)))=\mu'\circ
T^{-1}(\Lambda_{F,s,\delta}(A))=\mu(\Lambda_{F,\delta,s}(A)).
\end{equation*}
It then follows that for every $\delta>0$, we have
\begin{equation*}
\limsup_{s\rightarrow\infty}\mu'(\Lambda_{F',\delta,s}(T^{-1}(A)))=\limsup_{s\rightarrow\infty}\mu(\Lambda_{F,\delta,s}(A))=0,
\end{equation*}
and therefore, the flow $\{F'_{t}\}_{t\in\mathbb{R}}$ on
$(X',\mathfrak{B}',\mu')$ has Property B.
\end{proof}
\noindent We can prove now the following result:
\begin{prop}
Property B is an invariant for conjugacy of flows.
\end{prop}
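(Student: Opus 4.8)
The plan is to deduce the statement from the two preceding propositions, which between them handle the two ways a conjugacy can fail to be a literal identification of measure spaces: Proposition~\ref{1x} transports Property~B along a flow-intertwining isomorphism under which one measure is carried exactly onto the other, while Proposition~\ref{2x} transports it between two equivalent $\sigma$-finite measures on a fixed space. A general conjugacy decomposes into these two situations, so I do not expect to analyse the sets $\Lambda_{F,\delta,s}$ directly.

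Concretely, let $\{F_{t}\}_{t\in\mathbb{R}}$ on $(X,\mathfrak{B},\mu)$ and $\{F'_{t}\}_{t\in\mathbb{R}}$ on $(X',\mathfrak{B}',\mu')$ be conjugate via an isomorphism $T:(X,\mathfrak{B},\mu)\to(X',\mathfrak{B}',\mu')$ with $F'_{t}(Tx)=T(F_{t}x)$ for all $t$ and $\mu$-a.e.\ $x$, and suppose $\{F_{t}\}$ has Property~B. I would first introduce on $X'$ the transported measure $\widetilde{\mu}$ defined by $\widetilde{\mu}(A')=\mu(T^{-1}(A'))$, that is, the pushforward of $\mu$ under $T$. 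Writing $R=T^{-1}:X'\to X$, the intertwining relation rearranges to $R(F'_{t}(x'))=F_{t}(Rx')$, and by construction the pushforward $R_{*}\widetilde{\mu}$ equals $\mu$; these are exactly the hypotheses of Proposition~\ref{1x} applied with $R,\widetilde{\mu},\mu$ in place of $T,\mu',\mu$. That proposition then yields that $\{F'_{t}\}$ on $(X',\mathfrak{B}',\widetilde{\mu})$ has Property~B.

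It remains to pass from $\widetilde{\mu}$ to the given measure $\mu'$. Because $T$ is an isomorphism of measure spaces, both $T$ and $T^{-1}$ send null sets to null sets, so $\widetilde{\mu}(A')=\mu(T^{-1}(A'))=0$ exactly when $A'$ is $\mu'$-null; hence $\widetilde{\mu}$ is a $\sigma$-finite measure equivalent to $\mu'$. Proposition~\ref{2x} then carries Property~B of $\{F'_{t}\}$ from $(X',\mathfrak{B}',\widetilde{\mu})$ to $(X',\mathfrak{B}',\mu')$, which is what we want. Since conjugacy is symmetric --- $T^{-1}$ intertwines $\{F'_{t}\}$ with $\{F_{t}\}$ --- the same argument run in the other direction shows that Property~B of $\{F'_{t}\}$ implies Property~B of $\{F_{t}\}$, so the two flows have Property~B simultaneously.

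The only delicate point, and the closest thing to an obstacle, is the orientation bookkeeping: Proposition~\ref{1x} is stated with the isomorphism running from the primed space to the unprimed one and with the measure transported in a prescribed direction, so one must pass to $R=T^{-1}$ and check that both the intertwining identity and the relation $R_{*}\widetilde{\mu}=\mu$ come out the right way round before citing it. Once $\widetilde{\mu}$ is taken to be the pushforward of $\mu$, the rest is formal.
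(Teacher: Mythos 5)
Your proposal is correct and follows essentially the same route as the paper: introduce the pushforward of $\mu$ to the primed space, apply Proposition~\ref{1x} to transfer Property~B along the (suitably oriented) conjugating isomorphism, and then invoke Proposition~\ref{2x} to replace the pushforward measure by the equivalent measure $\mu'$. Your extra care about the orientation of the isomorphism and the symmetry remark are fine but add nothing beyond the paper's argument.
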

\begin{proof}
Let $(X,\mathfrak{B},\mu)$, $(X',\mathfrak{B}',\mu')$ be two
$\sigma$-finite measure spaces and assume that
$\{F_{t}\}_{t\in\mathbb{R}}$ is a flow on $(X,\mathfrak{B},\mu)$ which
satisfies Property B. Let $\{F'_{t}\}_{t\in\mathbb{R}}$ be a flow on
$(X',\mathfrak{B}',\mu')$ which is conjugate to
$\{F_{t}\}_{t\in\mathbb{R}}$. Hence, there exists an isomorphism
$T:(X',\mathfrak{B}',\mu')\rightarrow (X,\mathfrak{B},\mu)$ such
that $F_{t}(Tx)=T(F'_{t}(x))$ for $\mu'$-almost all $x\in X'$ and
for all $t\in\mathbb{R}$. As $T$ is an isomorphism,
$\mu'\sim\mu\circ T^{-1}$. Let $\mu''$ be the measure on $X'$ given
by $\mu''=\mu\circ T^{-1}$. Thus $F_{t}(Tx)=T(F'_{t}(x))$ for
$\mu''$ almost all $x\in X'$ and for all $t\in\mathbb{R}$. By
Proposition \ref{1x}, we have that $\{F'_{t}\}_{t\in\mathbb{R}}$ on
$(X',\mathfrak{B}',\mu'')$ has Property B. As $\mu''$ and $\mu'$ are
equivalent measures, Proposition \ref{2x} implies that
$\{F'_{t}\}_{t\in\mathbb{R}}$ has Property B.
\end{proof}

Let $T$  be an automorphism of $(X_{0},\mathfrak{B}_{0},\mu_{0})$
and $\xi:X_{0}\rightarrow \mathbb{R}$ be a positive measurable
function. Consider $Y=\{(x,t)\in X_{0}\times\mathbb{R}, 0\leq t<
\xi(x)\}$ and let $\nu$ be the measure on $Y$ that is the
restriction of the product measure $\mu_{0}\times\lambda$, where
$\lambda$ is the usual Lebesgue measure on $\mathbb{R}$. Let
$\{F_{t}\}_{t\in\mathbb{R}}$ be the flow built under the function
$\xi$ with  base automorphism $T$; it is defined on $(Y,\nu)$, and
for $t>0$ is given by
\begin{equation*}
F_{t}(x,s)=\left\{
\begin{array}{l}
(x,t+s) \text{ if }0 \leq t+s< \xi(x)\\[0.2cm]
(T(x),t+s-\xi(x)) \text{ if } \xi(x)\leq t+s< \xi(T(x))+\xi(x)\\[0.2cm]
\cdots.
\end{array}
\right.
\end{equation*}

\noindent For a measurable set $A\subseteq X_{0}$ we define
\begin{equation*}
\Delta_{F,\delta,s}(A)=\{ x\in A, \exists
t\in(e^{s-\delta},e^{s+\delta})\cup(-e^{s+\delta},-e^{s-\delta}),
F_{t}(x,0)\in A\times\{0\}\}.
\end{equation*}
With this notation we have the following result:
\begin{prop}\label{PrB}
The flow $\{F_{t}\}_{t\in\mathbb{R}}$ has Property B, if and only if
there exists a measurable set $A_{0}\subseteq X_{0}$ of positive
measure such that, for all $\delta>0$,
\begin{equation}\label{x}
\limsup_{s\rightarrow\infty}\mu_{0}(\Delta_{F,\delta,s}(A_{0}))=0.
\end{equation}
\end{prop}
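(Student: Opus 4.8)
The plan is to prove both implications; in each the engine is two elementary features of a flow built under a function. Writing $r$ for the fibre coordinate (to free the letter $s$ for the scale) and $S_n(x)=\sum_{k=0}^{n-1}\xi(T^k x)$ for the $n$-th return time to the base, the first feature is the same-height return identity $F_{S_n(x)}(x,r)=(T^n x,r)$, valid as soon as $r<\xi(x)$ and $r<\xi(T^n x)$; the second is that a fixed additive constant is swallowed by the logarithmic annulus, i.e.\ for $\delta'>\delta$ and fixed $c>0$ one has $(e^{s-\delta},e^{s+\delta})+[-c,c]\subseteq(e^{s-\delta'},e^{s+\delta'})$ once $s$ is large. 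I also use that $\limsup_s(\cdot)=0$ forces $\lim_s(\cdot)=0$ for the nonnegative quantities at hand.

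For the implication $(\Leftarrow)$, assume $A_0$ satisfies \eqref{x}. As $\xi>0$, condition \eqref{x} passes to subsets, so I may replace $A_0$ by $A_0\cap\{\xi\ge c\}$ and assume $\xi\ge c$ on $A_0$ for some $c>0$; put $A=A_0\times[0,c)$, a positive measure subset of $Y$. If $(x,r)\in\Lambda_{F,\delta,s}(A)$ then $F_t(x,r)=F_{t+r}(x,0)\in A$ for some $t$ in the annulus, which forces a base return: some $n$ has $T^n x\in A_0$ and $S_n(x)\in(t+r-c,\,t+r]$. Since $t+r$ lies within $c$ of the annulus, the absorption remark gives $S_n(x)\in(e^{s-\delta'},e^{s+\delta'})$ for large $s$, so $x\in\Delta_{F,\delta',s}(A_0)$. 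Hence $\Lambda_{F,\delta,s}(A)\subseteq\Delta_{F,\delta',s}(A_0)\times[0,c)$ and $\nu(\Lambda_{F,\delta,s}(A))\le c\,\mu_0(\Delta_{F,\delta',s}(A_0))\to0$; as $\delta>0$ is arbitrary and the negative half of the annulus is symmetric, $A$ satisfies \eqref{propR}.

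For $(\Rightarrow)$, let $A\subseteq Y$ satisfy \eqref{propR}, so $\nu(\Lambda_{F,\delta,s}(A))\to0$ for every $\delta$. I run the previous correspondence backwards: a base return of a set $A_0$ at time $S_n(x)$ yields, via the same-height identity, a true flow return inside $A$ as long as both $(x,r)$ and $(T^n x,r)$ lie in $A$ for a sizeable band of heights $r$ (note $(x,r),(T^n x,r)\in A\subseteq Y$ automatically supply the constraints $r<\xi(x)$, $r<\xi(T^n x)$ the identity needs). The key preliminary step is to build a uniform-density column: a positive measure set $A_0\subseteq X_0$, an interval $[a,b)$, and $\bar\varepsilon<\tfrac12$ with $\lambda(A^x\cap[a,b))\ge(1-\bar\varepsilon)(b-a)$ for \emph{every} $x\in A_0$, where $A^x=\{r:(x,r)\in A\}$. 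Granting this, fix $\delta,s$ and $x\in\Delta_{F,\delta,s}(A_0)$ with witness $n$; the heights $r\in[a,b)$ with $(x,r),(T^n x,r)\in A$ form a set of measure at least $(1-2\bar\varepsilon)(b-a)$, and for each such $r$ the same-height identity gives $F_{S_n(x)}(x,r)=(T^n x,r)\in A$ with $S_n(x)$ in the annulus, so $(x,r)\in\Lambda_{F,\delta,s}(A)$. Integrating over $x\in\Delta_{F,\delta,s}(A_0)$ and applying Fubini gives
\[
(1-2\bar\varepsilon)(b-a)\,\mu_0(\Delta_{F,\delta,s}(A_0))\le\nu(\Lambda_{F,\delta,s}(A)),
\]
so $\mu_0(\Delta_{F,\delta,s}(A_0))\to0$ for each $\delta$ (backward returns handle the negative half), and $A_0$ satisfies \eqref{x}.

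The step I expect to be the main obstacle is the uniform-density column. Fibrewise the Lebesgue density theorem gives, for $\nu$-a.e.\ $(x,r)\in A$, that $\tfrac1c\lambda(A^x\cap[r,r+c))\to1$ as $c\to0^+$, but with both the scale and the height $r$ depending on the point. Uniformity of the scale over a positive measure set is obtained by a pigeonhole/Egorov argument on the nested sets $E_k=\{(x,r)\in A:\tfrac1c\lambda(A^x\cap[r,r+c))\ge1-\varepsilon\ \text{for all }0<c<1/k\}$, which exhaust $A$; fix $k$ with $\nu(E_k)>0$ and a common scale $c<1/k$. The genuine subtlety is aligning the heights: because the estimate must survive intersecting \emph{two} fibres, the per-column deficiency has to be brought below $\tfrac12$, so the crude merging of two height-bands (which loses a factor of two) is not affordable. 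I avoid this by choosing a thin height-window $[a,a+\eta)$ with $\eta=\varepsilon c$ that meets $E_k$ in positive measure, letting $A_0$ be its base projection and taking the band $[a+\eta,a+c)$: every $x\in A_0$ then fills this band with deficiency at most $\varepsilon/(1-\varepsilon)\le2\varepsilon<\tfrac12$ for $\varepsilon<\tfrac14$. The remaining points—joint measurability of $(x,r)\mapsto\tfrac1c\lambda(A^x\cap[r,r+c))$ for Egorov and Fubini, and measurability of the base projection—are routine.
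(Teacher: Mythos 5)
Your proof is correct, but in the harder direction (Property B $\Rightarrow$ existence of $A_0$) it takes a genuinely different route from the paper's. The paper also extracts a column from $A$, but only with a \emph{positive proportion} in the fibres: by Fubini it finds $A_0$, an interval $[m,m+1]$ and $\alpha>0$ with $\lambda(A_x\cap[m,m+1])>\alpha$ for all $x\in A_0$, sets $K=A_0\times[m,m+1]\cap A$, and then matches an \emph{arbitrary} pair of column points $(x,a),(y,b)\in K$ over a base return $F_t(x,0)=(y,0)$ via $F_{t-a+b}(x,a)=(y,b)$; the additive height shift $|b-a|\leq 1$ is absorbed by doubling the annulus width ($\delta\to2\delta$), which costs nothing since Property B and \eqref{x} quantify over all $\delta>0$. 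This cross-height matching makes your uniform near-full-density column --- the fibrewise Lebesgue density theorem, the exhaustion by the sets $E_k$, and the thin-window alignment needed to push the per-fibre deficiency below $\tfrac12$ so that two fibres still overlap at a common height --- entirely avoidable: positive proportion suffices because the two fibres never need to share a height. What your same-height identity $F_{S_n(x)}(x,r)=(T^nx,r)$ buys in exchange is that the flow-return time equals the base-return time exactly, so $\Delta_{F,\delta,s}(A_0)$ maps into $\Lambda_{F,\delta,s}(A)$ with the \emph{same} $\delta$ rather than $2\delta$ --- a mild sharpening the statement does not require. Your easy direction (take $A=A_0\times[0,c)$ after trimming to $\{\xi\geq c\}$, absorb the height offset into $\delta'>\delta$) is essentially the paper's argument, which uses $A=A_0\times[0,1]\cap Y$ and $2\delta$; your observations that \eqref{x} passes to subsets and that $\limsup=0$ forces $\lim=0$ are correct, and the measurability points you defer (for Egorov, Fubini, and the base projection) are indeed routine in a standard measure space.
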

\begin{proof}
Assume that $\{F_{t}\}_{t\in\mathbb{R}}$ has Property B. Then, there
exists a measurable set $A\subseteq Y$ such that, for every
$\delta>0$,
\begin{eqnarray*}
\limsup_{s\rightarrow\infty}\nu(\Lambda_{F,2\delta,s}(A))=0.
\end{eqnarray*}
Since $A$ has positive measure, there exists a measurable set
$A_{0}\subseteq X_{0}$ of positive measure, an integer $m\geq 1$,
and a positive real $\alpha$ such that for all $x\in A_{0}$,
$\lambda(A_{x}\cap [m,m+1])>\alpha$, where $A_{x}=\{y\in \mathbb{R}:
(x,y)\in A\}$. Let $K=A_{0}\times [m, m+1]\cap A$. Clearly,
$K\subseteq A$, and then, for all $\delta>0$, we have
\begin{eqnarray}\label{3x}
\limsup_{s\rightarrow\infty}\nu(\Lambda_{F,2\delta,s}(K))=0.
\end{eqnarray}
Let $\delta>0$. For any $x\in \Delta_{F,\delta,s}(A_{0})$, there
exists $y\in A_{0}$ and $t\in (e^{s-\delta},e^{s+\delta})\cup
(-e^{s+\delta},-e^{s-\delta})$ such that $F_{t}(x,0)=(y,0)$. For
$(x,a),(y,b)\in K$, we have that
\[F_{t-a+b}(x,a)=F_{t+b}(x,0)=F_{b}(F_{t}(x,0))=F_{b}(y,0)=(y,b).\]
It is straightforward to check that for $s$ large enough, $t-a+b \in
(e^{s-2\delta},e^{s+2\delta})\cup (-e^{s+2\delta},-e^{s-2\delta})$
whenever $t\in (e^{s-\delta},e^{s+\delta})\cup
(-e^{s+\delta},-e^{s-\delta})$. Consequently,
\[\Delta_{F,\delta,s}(A_{0})\times [m,m+1]\cap K\subseteq \Lambda_{F,2\delta,s}(K),\]
whence
\[\alpha\cdot\mu_{0}(\Delta_{F,\delta,s}(A_{0}))\leq \nu(\Lambda_{F,2\delta,s}(K)),\]
and then (\ref{x}) follows from (\ref{3x}).

Conversely, consider $A_{0}\subseteq X_{0}$ satisfying (\ref{x}).
Let $A=A_{0}\times [0,1]\cap Y$. Proceeding in the same manner as
above, for all $\delta>0$, and for $s$ large enough, we have
\[\Lambda_{F,\delta,s}(A)\subseteq \Delta_{F,2\delta,s}(A_{0})\times [0,1]\cap A.\]
Then, by (\ref{x}), we obtain that $\{F_{t}\}_{t\in\mathbb{R}}$
satisfies Property B.
\end{proof}

\section{Property B implies not AT}

In this section we show that if $\mathcal{R}$ is an ergodic
hyperfinite equivalence relation of type III$_{0}$ whose associated
flow has Property B, then $\mathcal{R}$ does not satisfy Krieger's
Property A and therefore is not of product type. A consequence of
this result is that any properly ergodic flow with Property B is not
approximately transitive. Remark that if $\mathcal{R}$ is of type
$III_{\lambda}$, $\lambda\neq 0$, then the associated flow of
$\mathcal{R}$ does not have Property B.

Consider an ergodic hyperfinite equivalence relation $\mathcal{R}$
of type III$_{0}$ on $(X,\mathfrak{B},\mu)$ and let $\delta$ be the
Radon-Nicodym cocycle of $\mu$ with respect to $\mathcal{R}$.
Replacing eventually $\mu$ with an equivalent measure we can assume
that $\mu$ is a lacunary measure (see for example \cite{KW},
Proposition 2.3). Define
$$\xi(x)=\min\{\log\delta(x',x);\ (x',x)\in\mathcal{R},\
\log\delta(x',x)>0\}$$ and consider $\mathcal{S}$ the equivalence
relation on $X$ given by
\[(x,y)\in\mathcal{S}\text{ if and only if }
(x,y)\in\mathcal{R} \text{ and }\delta(x,y)=1.\] Let
$\mathfrak{B}(\mathcal{S})$ the $\sigma$-algebra of sets in
$\mathfrak{B}$ that are $\mathcal{S}$-invariant. Let $X_{0}$ be the
quotient space $X/\mathfrak{B}(\mathcal{S})$, that is the space of
ergodic components of $\mathcal{S}$. We denote the quotient map from
$X$ onto $X_{0}$ by $\pi$, where $\pi(x)$ is the element of $X_{0}$
containing $x$. On $X_{0}$, consider the measure
$\mu_{0}=\mu\circ\pi^{-1}$. Note that $\xi(x)$ is
$\mathfrak{B}(\mathcal{S})$-measurable and therefore, $\xi$ can be
regarded as a function on $X_{0}$. We have an ergodic automorphism
$T$ on $X_{0}$ defined $T(\pi(x))=\pi(x')$ where
$(x,x')\in\mathcal{R}$ and $\log \delta(x',x)=\xi(\pi(x))$. 
Then, the associated flow $\{F_{t}\}_{t\in\mathbb{R}}$ of
$\mathcal{R}$ can be realized as the flow built under the ceiling
function $\xi$ with base automorphism $T$ (see for example
\cite{HO} or \cite{KW}).

\begin{lem}
\label{fl4}
Let $(x,x')\in\mathcal{R}$, $z=\pi(x)$ and $z'=\pi(x')$. Then
$F_{\log\delta(x',x)}(z,0)$ $=(z',0)$.
\end{lem}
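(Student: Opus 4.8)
The plan is to reduce the general statement to the behaviour of the flow along the orbit of the base automorphism $T$, and then to combine the cocycle identity for $\delta$ with the group law $F_{t+s}=F_{t}\circ F_{s}$.

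First I would record the fundamental relation between $T$ and the first return to the base. By the very definition of the flow built under $\xi$, starting from a point $(z,0)$ with $z\in X_{0}$ and flowing for time $\xi(z)$ lands exactly on $(Tz,0)$: since $t+s=\xi(z)$ sits at the top of the column over $z$, the second branch in the definition of $F_{t}$ gives $F_{\xi(z)}(z,0)=(Tz,0)$. Iterating this and using $F_{t+s}=F_{t}\circ F_{s}$, an easy induction shows that for every $n\geq 1$ one has $F_{\sigma_{n}}(z,0)=(T^{n}z,0)$, where $\sigma_{n}=\sum_{k=0}^{n-1}\xi(T^{k}z)$ is the $n$-th partial sum of the ceiling function along the $T$-orbit; applying the same identity to the inverse flow handles negative exponents.

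Next I would identify the partial sums $\sigma_{n}$ with values of the cocycle. Fixing $x$ with $z=\pi(x)$, I would choose a chain $x=x_{0},x_{1},x_{2},\dots$ in the $\mathcal{R}$-orbit of $x$ with $\pi(x_{k})=T^{k}z$ and $\log\delta(x_{k+1},x_{k})=\xi(T^{k}z)$, as permitted by the definition of $T$. The cocycle identity then yields $\log\delta(x_{n},x)=\sum_{k=0}^{n-1}\log\delta(x_{k+1},x_{k})=\sigma_{n}$, so that $F_{\log\delta(x_{n},x)}(z,0)=(\pi(x_{n}),0)$. Because $\delta$ takes the value $1$ on $\mathcal{S}$ and $\pi$ collapses $\mathcal{S}$-classes, the quantity $\log\delta(x',x)$ depends only on $\pi(x')$; hence for any $x'$ with $\pi(x')=T^{n}z$ one has $\log\delta(x',x)=\log\delta(x_{n},x)=\sigma_{n}$, and the lemma follows for such $x'$. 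To reach all pairs I would note that $T$ generates the quotient action on $X_{0}$, so every $z'=\pi(x')$ with $(x',x)\in\mathcal{R}$ equals $T^{n}z$ for some $n\in\mathbb{Z}$; the sign of $n$ matches that of $\log\delta(x',x)$, and the case $n<0$ is obtained from the case $n>0$ applied to the reversed pair $(x',x)$ together with $F_{-t}=F_{t}^{-1}$.

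The step I expect to be the main obstacle is the matching used above: verifying that the discrete set of return times $\{\sigma_{n}\}$ coincides exactly with the attainable cocycle values $\{\log\delta(x',x):(x',x)\in\mathcal{R}\}$, with no intermediate values and no gaps. This is where lacunarity of $\mu$ is essential, as it guarantees that $\xi(z)$ is a genuine positive minimum of $\log\delta(\cdot,x)$ and that the levels over $z$ form a discrete increasing sequence, so that the first return to the base occurs at time $\xi(z)$ rather than at some smaller positive cocycle value. Once this discreteness is secured, the cocycle identity and the group law close the argument with only routine bookkeeping.
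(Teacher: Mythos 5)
Your proposal is correct and takes essentially the same route as the paper: both decompose $\log\delta(x',x)$ as $\xi(z)+\xi(Tz)+\cdots+\xi(T^{n}z)$ via a chain of intermediate points in $\mathcal{R}(x)$ whose existence and discreteness come from lacunarity of $\mu$, and then iterate the identity $F_{\xi(w)}(w,0)=(Tw,0)$ together with the group law and the cocycle identity. The only difference is bookkeeping direction --- the paper enumerates the finitely many cocycle values between $0$ and $\log\delta(x',x)$ and telescopes downward, while you build the $T$-chain upward and then match $x'$ to a partial sum $\sigma_{n}$ --- and this is immaterial, since the matching step you flag as the main obstacle is secured by exactly the lacunarity argument the paper itself uses.
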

\begin{proof}
Notice that it is enough to prove the lemma for $\log\delta(x',x)$
positive. Since $\mu$ is a lacunary measure, there are only finitely
many values, say $n$, of $\log\delta(z,x)$ between $0$ and
$\log\delta(x',x)$. Hence there exists $x_{1}, x_{2},\ldots, x_{n}$
in the orbit $\mathcal{R}(x)$ of $x$ such that
$0<\log\delta(x_{1},x)<\cdots<\log\delta(x_{n},x)<\log\delta(x',x)$.
Then
$$\log\delta(x',x)=\log\delta(x_{1},x)+\log\delta(x_{2},x_{1})+\cdots+\log\delta(x_{n},x_{n-1})+\log\delta(x',x_{n}).$$
If $z_{i}=\pi(x_{i})$, then $z_{i}=T^{i}(z)$ and
$F_{\xi(T^{i-1}(z))}(T^{i-1}(z),0)=(T^{i}(z),0)$, for $1\leq i\leq
n$. Notice that
$\log\delta(x',x)=\xi(z)+\xi(T(z))+\cdots+\xi(T^{n}(z))$.
Therefore\begin{align*}
F_{\log\delta(x',x)}(z,0)&=F_{\xi(z)+\xi(T(z))+\cdots+\xi(T^{n}(z))}(z,0)\\
&=F_{\xi(T(z))+\cdots+\xi(T^{n}(z))}(T(z),0)=\cdots=(z',0).
\end{align*}
\end{proof}

\begin{thm}
With the above notation, if the associated flow
$\{F_{t}\}_{t\in\mathbb{R}}$ of $\mathcal{R}$ has Property B, then
$\mathcal{R}$ does not have Property A.
\end{thm}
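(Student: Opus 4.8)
The plan is to argue by contradiction: I assume that $\mathcal{R}$ has Property A and produce a violation of Property B for the associated flow. The bridge between the two properties is Lemma \ref{fl4}, which identifies the return times of $\{F_{t}\}_{t\in\mathbb{R}}$ to the base with the values of the log-cocycle $\log\delta$. Indeed, if $x$ belongs to a set $K_{\mu,\mathcal{R}}(B,s,\zeta)$, witnessed by some $y\in B$ with $(x,y)\in\mathcal{R}$ and $\log\delta(y,x)$ in the prescribed interval around $\pm e^{s}$, then Lemma \ref{fl4}, applied to the pair $(x,y)$ and valid for log-cocycle values of either sign, gives $F_{\log\delta(y,x)}(\pi(x),0)=(\pi(y),0)$. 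Thus $\pi(x)$ witnesses membership in a $\Delta_{F,\zeta,s}$-set, and this is precisely the mechanism converting ``many cocycle resonances'' (Property A) into ``many flow returns'' (the negation of Property B).

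First I would apply Proposition \ref{PrB} to rephrase the hypothesis: the flow has Property B, so there is a set $A_{0}\subseteq X_{0}$ with $\mu_{0}(A_{0})>0$ such that $\limsup_{s\to\infty}\mu_{0}(\Delta_{F,\delta,s}(A_{0}))=0$ for every $\delta>0$. I then lift $A_{0}$ to $X$ by putting $A=\pi^{-1}(A_{0})$; since $\mu_{0}=\mu\circ\pi^{-1}$, this set satisfies $\mu(A)=\mu_{0}(A_{0})>0$. Assuming Property A, I apply Proposition \ref{alfa} with $\lambda=\mu$ — this choice is essential, as it makes the Radon–Nikodym cocycle governing the sets $K$ coincide with the cocycle $\delta$ used to build the flow — and with a fixed $\epsilon>0$. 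Writing $\delta_{0}$ for the constant denoted $\delta$ in Proposition \ref{alfa} (to avoid clashing with the cocycle), I obtain $\eta,\delta_{0}>0$ and a subset $B\subseteq A$ of positive measure with
$$\limsup_{s\to\infty}\mu\bigl(K_{\mu,\mathcal{R}}(B,s,\delta_{0}+\epsilon)\bigr)>e^{-\epsilon}\eta\cdot\mu(B)>0.$$

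The heart of the proof is the inclusion $K_{\mu,\mathcal{R}}(B,s,\delta_{0}+\epsilon)\subseteq\pi^{-1}\bigl(\Delta_{F,\delta_{0}+\epsilon,s}(A_{0})\bigr)$, valid for each $s$. Given $x$ in the left-hand set, there is $y\in B$ with $(x,y)\in\mathcal{R}$ and $\log\delta(y,x)$ lying in $(e^{s-\delta_{0}-\epsilon},e^{s+\delta_{0}+\epsilon})\cup(-e^{s+\delta_{0}+\epsilon},-e^{s-\delta_{0}-\epsilon})$; since $x,y\in B\subseteq\pi^{-1}(A_{0})$ we have $\pi(x),\pi(y)\in A_{0}$, and Lemma \ref{fl4} moves $(\pi(x),0)$ to $(\pi(y),0)\in A_{0}\times\{0\}$ in time $\log\delta(y,x)$, which lies in that same interval. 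Hence $\pi(x)\in\Delta_{F,\delta_{0}+\epsilon,s}(A_{0})$, establishing the inclusion. Taking $\mu$-measures and using $\mu\circ\pi^{-1}=\mu_{0}$, I get $\mu(K_{\mu,\mathcal{R}}(B,s,\delta_{0}+\epsilon))\leq\mu_{0}(\Delta_{F,\delta_{0}+\epsilon,s}(A_{0}))$ for every $s$; passing to the limsup, the left side stays above $e^{-\epsilon}\eta\cdot\mu(B)>0$, while the right side is $0$ by Property B applied with $\delta=\delta_{0}+\epsilon$, which is the contradiction sought.

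The steps are mostly bookkeeping, but two points deserve attention. I must align the interval widths on the two sides exactly, which is why the same parameter $\delta_{0}+\epsilon$ is fed into both $K$ and $\Delta$; and I must identify the cocycles, which is why $\lambda=\mu$ is chosen in Proposition \ref{alfa}. The only genuinely delicate issue is the measure comparison across the quotient map $\pi$: I avoid any question of whether $\pi$ preserves measurability of images by phrasing the key relation as $K\subseteq\pi^{-1}(\Delta)$ rather than $\pi(K)\subseteq\Delta$, so that measurability is automatic and $\mu(K)\leq\mu(\pi^{-1}(\Delta))=\mu_{0}(\Delta)$ follows at once from $\mu_{0}=\mu\circ\pi^{-1}$.
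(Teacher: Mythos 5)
Your proposal is correct and takes essentially the same route as the paper: both rest on the inclusion $K_{\mu,\mathcal{R}}(\,\cdot\,,s,\zeta)\subseteq \pi^{-1}\bigl(\Delta_{F,\zeta,s}(A_{0})\bigr)$ obtained from Lemma \ref{fl4}, the identity $\mu\circ\pi^{-1}=\mu_{0}$, and Proposition \ref{alfa} applied with $\lambda=\mu$. The only difference is organizational: the paper shows $\limsup_{s\to\infty}\mu\bigl(K_{\mu,\mathcal{R}}(\pi^{-1}(A_{0}),s,\delta)\bigr)=0$ for every $\delta>0$ and then cites Proposition \ref{alfa} (implicitly using monotonicity of $K$ in the set), whereas you invoke Proposition \ref{alfa} first and apply the inclusion to the resulting subset $B\subseteq\pi^{-1}(A_{0})$, which makes that implicit step explicit.
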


\begin{proof}
By Proposition \ref{PrB}, we can find a measurable set
$A_{0}\subseteq X_{0}$ of positive measure such that, for all
$\delta>0$,
\[\limsup_{s\rightarrow\infty}\mu_{0}(\Delta_{F,\delta,s}(A_{0}))=0.\]
Let $C=\pi^{-1}(A_{0})\subseteq X$ and $\delta>0$. Consider $s>0$
and $x\in K_{\mu,\mathcal{R}}(C,s,\delta)$. Thus, $x\in C$ and there
exists $y\in C$ such that $(x,y)\in\mathcal{R}$ and $\log
\delta(y,x)\in (e^{s-\delta}, e^{s+\delta})\cup
(-e^{s+\delta},-e^{s-\delta})$. From Lemma \ref{fl4}, we have that
\[F_{\log \delta(y,x)}(\pi(x),0)=(\pi(y),0).\]
Hence, $\pi(x)\in\Delta_{F,\delta,s}(A_{0})$ and then, $x\in
\pi^{-1}\left(\Delta_{F,\delta,s}(A_{0})\right)$.
Therefore,
\[K_{\mu,\mathcal{R}}(C,s,\delta)\subseteq
\pi^{-1}\left(\Delta_{F,\delta,s}(A_{0})\right),\] and consequently,
\[\mu(K_{\mu,\mathcal{R}}(C,s,\delta))\leq\mu\circ\pi^{-1}(\Delta_{F,\delta,s}(A_{0}))=\mu_{0}(\Delta_{F,s,\delta}(A_{0})).\]
This clearly implies that
\[\limsup_{s\rightarrow\infty}\mu(K_{\mu,\mathcal{R}}(C,s,\delta))=0\]
and then, by Proposition \ref{alfa}, $\mathcal{R}$ does not have
Property A.
\end{proof}
\begin{remark}
Since any product type equivalent relation of type III satisfies
Property A, it follows that an equivalent relation $\mathcal{R}$
whose associated flow has Property B is not of product type.
\end{remark}
Recall that any properly ergodic flow is the associated flow of
certain ergodic hyperfinite equivalence relation of type III$_{0}$
and a hyperfinite ergodic equivalence relation is of product type,
if and only if the associated flow is approximately transitive. We
have then the following result:
\begin{cor}\label{c1}
Let $\{F_{t}\}_{t\in\mathbb{R}}$ be a properly ergodic flow on $(X,\mathfrak{B},\mu)$ which satisfies Property B. Then
$\{F_{t}\}_{t\in\mathbb{R}}$ is not approximately transitive.
\end{cor}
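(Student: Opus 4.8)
The plan is to deduce the corollary by chaining together the results already established, via Krieger's correspondence and the Connes--Woods characterization of AT. First I would use the fact, recalled just above the statement, that every properly ergodic flow arises as the associated flow of some ergodic hyperfinite equivalence relation $\mathcal{R}$ of type III$_{0}$. The associated flow of this $\mathcal{R}$ is conjugate to $\{F_{t}\}_{t\in\mathbb{R}}$; since $\{F_{t}\}_{t\in\mathbb{R}}$ has Property B and Property B is invariant under conjugacy of flows (the proposition proved in Section 3), the associated flow of $\mathcal{R}$ also has Property B.

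Next I would apply the theorem proved immediately above: because the associated flow of $\mathcal{R}$ has Property B, $\mathcal{R}$ does not have Property A. Combining this with Theorem~\ref{kra}, which asserts that every ergodic product type equivalence relation of type III has Property A, forces $\mathcal{R}$ not to be of product type. This is exactly the content of the preceding Remark.

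Finally I would invoke the Connes--Woods result \cite{CW}: an ergodic hyperfinite equivalence relation of type III$_{0}$ is of product type if and only if its associated flow is approximately transitive. Since $\mathcal{R}$ is not of product type, its associated flow is not AT; as this flow is conjugate to $\{F_{t}\}_{t\in\mathbb{R}}$ and approximate transitivity is itself a conjugacy invariant, $\{F_{t}\}_{t\in\mathbb{R}}$ is not AT. The argument is a direct deductive chain, so there is no serious analytic obstacle; the only point demanding care is the first step, namely ensuring that Property B is genuinely transported to the associated flow of the chosen $\mathcal{R}$, which is guaranteed by the conjugacy-invariance of Property B established earlier.
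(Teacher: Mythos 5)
Your proposal is correct and follows essentially the same route the paper intends: the paper derives the corollary directly from the preceding theorem and remark via Krieger's correspondence and the Connes--Woods characterization, exactly the deductive chain you describe (your explicit appeal to the conjugacy-invariance of Property B to transport the property to the associated flow of $\mathcal{R}$ is a careful spelling-out of a step the paper leaves implicit).
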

\begin{remark}
There exists ergodic flows which are not AT and do not satisfy
Property B, as the following example shows.
\end{remark}
\begin{ex}
In \cite{GH}, Giordano and Handelman constructed a factor $N$ whose
flow of weights is not AT. We recall that the flow of weights of $N$
can be realized as the flow built under a constant function and
which has a base automorphism that can be identified with the
Poisson boundary of the matrix valued random walk corresponding to
the dimension space given by the sequence of matrices
\[\left[\begin{array}{ccc}
x^{5^{n}} & 1\\
1 & x^{5^{n}}
\end{array}\right], n\geq 1.\]
Since, up to isomorphism, $N$ is the von Neumann algebra associated
to an ergodic hyperfinite equivalence relation $\mathcal{R}$, the
flow of weights of $N$ is, up to conjugacy, the associated flow of
$\mathcal{R}$. According to \cite{M2}, the equivalence relation
$\mathcal{R}$ has Property A, and then, from Theorem 4.2 we
conclude that the associated flow of $\mathcal{R}$ does not satisfy
Property B.
\end{ex}




\noindent The following result gives a sufficient condition for a nonsingular
automorphism to be not AT.
\begin{cor}
Let $T$ be a nonsingular automorphism of $(X,\mathfrak{B},\mu)$.
Assume that there exists $A\subset X$ of positive measure such that
\[\limsup_{s\rightarrow\infty}\{x\in A: \exists n\in (e^{s-\delta},e^{s+\delta})\cup(-e^{s+\delta},-e^{s-\delta}), T^{n}x\in A\}.\] Then $T$ is not AT.
\end{cor}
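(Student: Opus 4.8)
The plan is to deduce the statement from Corollary \ref{c1} by passing to the suspension of $T$. First I would form the flow $\{F_{t}\}_{t\in\mathbb{R}}$ built under the constant function $\xi\equiv 1$ with base automorphism $T$, realized on $Y=X\times[0,1)$ with the measure $\nu=\mu\times\lambda$. Since approximate transitivity forces ergodicity, I may assume $T$ is ergodic (otherwise $T$ is already not AT and there is nothing to prove); together with the non-atomicity of $(X,\mathfrak{B},\mu)$, a standard Fourier-in-the-fibre argument shows that the suspension flow is ergodic, and as each orbit is a null curve it is in fact \emph{properly} ergodic.

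The second step is to identify the hypothesis with condition (\ref{x}) of Proposition \ref{PrB} for this flow. For the constant roof function, $F_{t}(x,0)=(T^{\lfloor t\rfloor}x,\,t-\lfloor t\rfloor)$ for $t\geq 0$, and symmetrically $F_{-n}(x,0)=(T^{-n}x,0)$ for $t<0$, so $F_{t}(x,0)$ lies in $A\times\{0\}$ precisely when $t$ is an integer $n$ with $T^{n}x\in A$. Hence, taking $X_{0}=X$ and $\mu_{0}=\mu$,
\[
\Delta_{F,\delta,s}(A)=\{x\in A:\ \exists\, n\in(e^{s-\delta},e^{s+\delta})\cup(-e^{s+\delta},-e^{s-\delta}),\ T^{n}x\in A\},
\]
which is exactly the set appearing in the statement. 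Reading the assumption as the vanishing of this $\limsup$ of the measure for every $\delta>0$, this is precisely condition (\ref{x}), so Proposition \ref{PrB} shows that $\{F_{t}\}_{t\in\mathbb{R}}$ has Property B. Corollary \ref{c1} then yields that the properly ergodic suspension flow is not approximately transitive.

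The final step, which I expect to be the main obstacle, is transferring non-AT from the suspension flow back to $T$ itself. For this I would invoke the stability of approximate transitivity under the flow-under-a-function construction of Connes and Woods \cite{CW}: a nonsingular ergodic transformation is AT if and only if its suspension flow is AT. Granting this equivalence, the fact that $\{F_{t}\}_{t\in\mathbb{R}}$ is not AT forces $T$ to be not AT, completing the argument. The delicate points are exactly this transfer principle and the verification of proper ergodicity of the suspension; by contrast the measure-theoretic identification in the middle step is routine once the constant roof function has been fixed.
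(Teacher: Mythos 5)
Your proposal is correct and follows essentially the same route as the paper: build the flow under the constant function $1$ with base $T$, identify the hypothesis with condition (\ref{x}) so that Proposition \ref{PrB} yields Property B, apply Corollary \ref{c1}, and transfer non-AT back to $T$ --- and the transfer principle you flag as the delicate point is precisely Lemma 2.5 of \cite{CW}, which is exactly what the paper cites. Your extra verification that the suspension is (properly) ergodic is a point the paper leaves implicit but is indeed needed to invoke Corollary \ref{c1}.
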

\begin{proof}
Proposition \ref{PrB} implies that $\{F_{t}\}_{t\in\mathbb{R}}$, the
flow built under the constant function $f=1$ with base automorphism
$T$ has Property B and then by Corollary \ref{c1} it follows that $\{F_{t}\}_{t\in\mathbb{R}}$  is not AT. From Lemma 2.5 of \cite{CW}, we conclude that $T$ is not AT.
\end{proof}
\section{An ergodic flow which satisfies Property B}
In this section we construct a properly ergodic flow which satisfies
Property B and therefore is not AT. The flow that we construct is a
flow built under a function with a product odometer (conjugate to
the dyadic odometer) as base automorphism.

Let $(z_{n})_{n\geq 1}$ be the sequence of integers given by
$z_{n}=2^{n}-1$, for $n\geq 1$. Consider the product space
$X=\prod_{n\geq 1}\{0,1,\ldots z_{n}\}$ endowed with the usual
product $\sigma-$algebra and the product measure $\mu=\otimes_{n\geq
1}\mu_{n}$, where $\mu_{n}$ are the probability measures on
$\{0,1,\ldots z_{n}\}$ given by $\mu_{n}(i)=\frac{1}{2^{n}}$, for
$i=0,1,\ldots,z_{n}$ and $n\geq 1$. Let $T:X\rightarrow X$ be the
product odometer defined on $X$. We recall that $T$ is the
nonsingular automorphism defined for almost every $x\in X$ by
\begin{equation}\label{odo}
(Tx)_{n}=\left\{\begin{array}{c}
                              0 \ \ \ \ \ \ \ \text { if }n<N(x), \\
                              x_{n}+1\text{ if }n=N(x), \\
                              \ x_{n} \ \ \ \ \ \text{ if }n> N(x),
\end{array}
\right.
\end{equation}where $N(x)=\min\{n\geq 1: x_{n}< z_{n} \}$. Notice that $T$ is measure conjugate to the dyadic odometer.

Let $(K_{n})_{n\geq 4}$ be the sequence given by $$K_{n}=1!2!\cdots
n!, \text{ for }n\geq 4,$$ and let $f:X\rightarrow\mathbb{R}$ be the
function defined for almost every $x\in X$ by setting
\begin{equation}\label{eqq0}
f(x)=K_{2^{N+1}+x_{N+1}}
\end{equation}
where $x=(x_{n})_{n\geq 1}$ and $N=N(x)$.

\begin{prop}\label{prop}
Let $n\geq 4$ be a positive integer, $m=[\log_{2}{n}]$ and
$l=n-2^{m}$. For almost every $x\in X$, we have:
\begin{itemize}
\item[(i)] If there exists an integer $k\geq 1$ such that
\begin{equation}\label{eqq1}
K_{n}\leq \sum_{i=0}^{k-1}f(T^{i}x)<K_{n+1},\end{equation} then
$x_{m+1}=l$.
\item[(ii)] If there exists an integer $k\geq 1$ such that
\begin{equation}\label{eqq2}
K_{n}\leq \sum_{i=1}^{k}f(T^{-i}x)<K_{n+1},\end{equation} then
$x_{m+1}=l$.
\end{itemize}
\end{prop}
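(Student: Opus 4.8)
I want to understand what the partial sums of $f$ along the orbit of the odometer $T$ look like. The key is that $f(x)=K_{2^{N+1}+x_{N+1}}$ depends only on the first nonzero-ish coordinate $N=N(x)$ and the value $x_{N+1}$, and the $K_j=1!2!\cdots j!$ grow so fast that $K_{n+1}/K_n=(n+1)!\to\infty$. So a sum of the form $\sum f(T^i x)$ lands in the window $[K_n,K_{n+1})$ essentially only when the dominant term is a single $f$-value equal to $K_n$, i.e. when most of the contribution comes from orbit steps at which $2^{N+1}+x_{N+1}=n$; writing $n=2^m+l$ with $m=[\log_2 n]$ and $0\le l<2^m$, this forces $N+1=m+1$ and $x_{N+1}=l$, which is exactly the claimed coordinate value $x_{m+1}=l$.

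Let me first carry out the careful bookkeeping of the odometer. I would recall that for the product odometer on $\prod\{0,\dots,z_n\}$ with $z_n=2^n-1$, the forward iterates $T^i x$ cycle through coordinate values in an explicit way: as $i$ increases, the coordinate $x_1$ runs through its $2^1$ values, carrying into $x_2$ which runs through its $2^2$ values, and so on. I want to track the function $N(T^i x)$ and $(T^i x)_{N+1}$ as $i$ ranges over an interval, and compute how often each value of $f$ occurs in a block of iterates. The crucial quantitative fact is that in any stretch of the orbit, the number of $i$ with $f(T^i x)=K_j$ for a fixed $j$ is bounded in terms of the odometer's cycle structure, and because $K_{j}$ is negligible compared to $K_{j+1}$, a partial sum in the window $[K_n,K_{n+1})$ cannot accumulate from many terms all smaller than $K_n$ (their total is dwarfed by $K_n$ unless there are astronomically many of them, which the window's upper bound $K_{n+1}$ then limits), nor can it contain a term $\ge K_{n+1}$. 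Hence the sum must include at least one term exactly equal to $K_n$ and no larger term, and this term pins down $2^{N+1}+x_{N+1}=n$ at the relevant step.

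The heart of the argument is therefore an estimate of the type: if $K_n\le\sum_{i=0}^{k-1}f(T^i x)<K_{n+1}$, then the index set $\{0,\dots,k-1\}$ contains a step $i_0$ with $f(T^{i_0}x)=K_n$ and none with $f\ge K_{n+1}$, and moreover the odometer dynamics force the coordinate $x_{m+1}$ at the original point $x$ (not at $T^{i_0}x$) to equal $l$. I expect the delicate point to be transferring the coordinate constraint from the step $i_0$ back to $x$ itself: since $x_{m+1}$ is unchanged by $T^i$ for all small $i$ until a carry reaches position $m+1$, I must check that within the relevant window of iterates the coordinate $x_{m+1}$ has not yet been altered, so the value $l$ read off at step $i_0$ is genuinely the value $x_{m+1}$. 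This is where the precise relationship between the sizes $K_n$, the number of iterates $k$, and the carry times of the odometer must be reconciled.

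Finally, part (ii) is the time-reversed statement for the backward iterates $T^{-i}$, and I would prove it by the same analysis applied to $T^{-1}$; the odometer's inverse has an analogous explicit carry structure, and the rapid growth of $K_j$ yields the identical conclusion $x_{m+1}=l$. The main obstacle, as noted, is the combinatorial control of the odometer orbit over the interval whose $f$-sum falls in $[K_n,K_{n+1})$ — specifically verifying that exactly one "scale" $K_n$ dominates and that the coordinate it encodes is the untouched coordinate $x_{m+1}$ of the base point. Once that bookkeeping is in place, the conclusion $x_{m+1}=l$ follows immediately from the definition of $f$ and $n=2^m+l$.
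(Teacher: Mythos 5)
Your plan is, in outline, exactly the paper's proof: the paper sets $p=\max\{N(T^{i}x):0\le i\le k-1\}$, observes that no carry in the window reaches past position $p$ (so the coordinate read off at the dominant step is the untouched coordinate of $x$ itself --- this is precisely how your ``delicate point'' resolves), bounds the number of terms by the cycle length, $k<2\cdot 2^{2}\cdots 2^{p}$, and uses the factorial growth of the $K_{j}$ to squeeze $K_{2^{p+1}+x_{p+1}}\le\sum_{i=0}^{k-1}f(T^{i}x)<K_{2^{p+1}+x_{p+1}+1}$, whence $n=2^{p+1}+x_{p+1}$; part (ii) is handled symmetrically, as you propose. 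One micro-justification in your sketch is wrong, though harmlessly so: the window's upper bound $K_{n+1}$ does \emph{not} limit how many terms smaller than $K_{n}$ can occur --- $n!$ terms each equal to $K_{n-1}$ would sum to exactly $K_{n}<K_{n+1}$, with no contradiction. What actually limits the count is the odometer's carry structure (a run of $2\cdot 2^{2}\cdots 2^{p}$ consecutive steps must produce a carry past position $p$, hence a term of the next scale), which you correctly state elsewhere as your ``crucial quantitative fact''; your argument is sound once the remark about the upper bound is deleted and only the cycle-structure bound is used.

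Be aware, however, that your final identification ``$N+1=m+1$'' reproduces an off-by-one slip that is also present in the last line of the paper's own proof (``$m=p$''). Since $n=2^{p+1}+x_{p+1}$ with $0\le x_{p+1}\le 2^{p+1}-1$, one has $2^{p+1}\le n<2^{p+2}$, so $m=[\log_{2}n]=p+1$, not $p$; the argument therefore pins down $x_{m}=l$, not $x_{m+1}=l$. Indeed the conclusion $x_{m+1}=l$ is false as stated: take $n=4$ (so $m=2$, $l=0$), $k=1$, and $x=(0,0,x_{3},\dots)$ with $x_{3}\neq 0$; then $N(x)=1$, $f(x)=K_{4}$, so (\ref{eqq1}) holds, and $x_{2}=0=l$ while $x_{3}$ is arbitrary --- a positive-measure set of counterexamples. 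The corrected conclusion $x_{m}=l$ is what both your argument and the paper's actually establish, and it still suffices downstream: the bounds in the proof of Proposition 5.3 become $2^{-[\log_{2}n]}$ instead of $2^{-([\log_{2}n]+1)}$, which still tends to $0$.
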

\begin{proof}
(i) Let $x\in X$ such that $K_{n}\leq
\sum_{i=0}^{k-1}f(T^{i}x)<K_{n+1}$, for some integer $k\geq 1$. Let
$$p=\max\{N(T^{i}x); \  0\leq i\leq k-1\}.$$ Hence, there exists
$j$, $0\leq j <k$ such that $N(T^{j}x)=p$. By (\ref{eqq0}) we have
that $f(T^{j}x)=K_{2^{p+1}+x_{p+1}}$. From (\ref{odo}) we deduce
that $(T^{i}x)_{n}=x_{n}$ for $n>p$ and $1<i\leq k$. Also,
(\ref{odo}) implies that $2\cdot 2^{2}\cdots 2^{p}> k$. Hence,
$$K_{2^{p+1}+x_{p+1}}\leq\sum_{i=0}^{k-1}f(T^{i}x)<2\cdot 2^{2}\cdots 2^{p}\cdot K_{2^{p+1}+x_{p+1}}<K_{2^{p+1}+x_{p+1}+1}.$$
We claim that $n=2^{p+1}+x_{p+1}$. Indeed, if $n<2^{p+1}+x_{n+1}$ we
have $K_{n+1}\leq K_{2^{p+1}+x_{n+1}}\leq\sum_{i=0}^{k-1}f(T^{i}x)$,
which contradicts (\ref{eqq1}). If $n>2^{p+1}+x_{p+1}$, then
$K_{n}\geq K_{2^{p+1}+x_{p+1}+1}>\sum_{i=0}^{k-1}f(T^{i}x)$, which
again contradicts (\ref{eqq1}). Therefore $n=2^{p+1}+x_{p+1}$, and
then $m=p$ and $x_{m+1}=l$.

(ii) Let $x\in X$ such that $K_{n}\leq
\sum_{i=1}^{k}f(T^{-i}x)<K_{n+1}$, for  some positive integer $k$.
Let $$p=\max\{N(T^{-i}x); \  1\leq i\leq k\}$$ and remark that
$(T^{-i}x)_{n}=x_{n}$ for $n>p$ and $1\leq i\leq k$. The proof
follows in the same way as in case (i) and we leave the details to
the reader.
\end{proof}


Let $\{F_{t}\}_{t\in\mathbb{R}}$ be  the flow built under the
function $f$ with base automorphism $T$. Notice that
$\{F_{t}\}_{t\in\mathbb{R}}$ is a properly ergodic flow. The
following lemma follows directly from the definition of
$\{F_{t}\}_{t\in\mathbb{R}}$.
\begin{lem}\label{lem}
(i) If $t>0$ then $F_{t}(x,0)\in X \times\{0\}$ if and only if there
exists an integer $k\geq 1$ such that $t=\sum_{i=0}^{k-1}f(T^{i}x)$.

(ii) If $t<0$ then $F_{t}(x,0)\in X \times\{0\}$ if and only if
there exists an integer $k\geq 1$ such that
$t=-\sum_{i=1}^{k}f(T^{-i}x)$.
\end{lem}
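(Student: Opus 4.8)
The plan is to unwind the explicit piecewise description of the flow built under a function, since both statements concern only the return times of a ground point $(x,0)$ to the base $X\times\{0\}$, and the positivity of $f$ does all the work.

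For part (i) I would introduce the forward sums $S_{0}(x)=0$ and $S_{k}(x)=\sum_{i=0}^{k-1}f(T^{i}x)$ for $k\geq 1$. Using the displayed piecewise formula for $F_{t}$ with $t>0$, a routine induction on the number of jumps shows that for each $t>0$ there is a unique $k\geq 0$ with $S_{k}(x)\leq t<S_{k+1}(x)$ (existence uses $S_{k}(x)\to+\infty$ as $k\to\infty$ for a.e. $x$, which holds since the flow is defined for all real times), and that for this $k$ one has $F_{t}(x,0)=(T^{k}x,\,t-S_{k}(x))$. As the second coordinate $t-S_{k}(x)$ lies in $[0,f(T^{k}x))$, the point $F_{t}(x,0)$ lands in $X\times\{0\}$ exactly when $t-S_{k}(x)=0$, i.e. $t=S_{k}(x)$; and since $f>0$ and $t>0$ this forces $k\geq 1$, which is the asserted equivalence.

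For part (ii) I would pass to negative times through the group law, setting $u=-t>0$ and $R_{0}(x)=0$, $R_{j}(x)=\sum_{i=1}^{j}f(T^{-i}x)$. Flowing backward from $(x,0)$ descends through the fibers over $T^{-1}x,T^{-2}x,\dots$, and one checks inductively that $F_{-u}(x,0)$ meets the base $X\times\{0\}$ precisely at the times $u=R_{j}(x)$, where $F_{-R_{j}(x)}(x,0)=(T^{-j}x,0)$, the second coordinate being strictly positive in between. Since $R_{j}(x)>0$ requires $j\geq 1$, the return times are exactly $u=\sum_{i=1}^{j}f(T^{-i}x)$, and substituting $u=-t$ yields the stated formula.

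There is no genuine obstacle here: the argument is entirely bookkeeping. The only point needing a little care is the gluing in the backward direction, namely that the top of the fiber over $T^{-(j+1)}x$ is identified with the base point over $T^{-j}x$, so that the descending-height expression matches across each jump; granting this, everything is the direct translation of the definition of $\{F_{t}\}_{t\in\mathbb{R}}$.
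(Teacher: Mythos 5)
Your proof is correct, and it coincides with the paper's treatment: the paper offers no written proof, stating only that the lemma ``follows directly from the definition'' of the flow built under a function, and your argument is exactly that direct unwinding (identifying $F_{t}(x,0)=(T^{k}x,\,t-S_{k}(x))$ for $S_{k}(x)\leq t<S_{k+1}(x)$, with the symmetric bookkeeping for negative times). The only points needing care --- positivity of $f$ forcing $k\geq 1$, divergence of the partial sums (automatic here since $f$ is bounded below by $K_{4}>0$), and the gluing at the top of each fiber in the backward direction --- are all handled correctly in your writeup.
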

\begin{prop}For any $\delta>0$,
\begin{equation}\label{eq0}
\lim_{s\rightarrow\infty}\mu(\Delta_{F,\delta,s}(X))=0.
\end{equation}
\end{prop}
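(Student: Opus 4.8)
The plan is to reduce the statement, via Lemma \ref{lem}, to a statement about the return-time sums $\sum_{i=0}^{k-1}f(T^{i}x)$ and $\sum_{i=1}^{k}f(T^{-i}x)$, and then to exploit the super-exponential growth of the sequence $(K_{n})$ together with the coordinate constraint furnished by Proposition \ref{prop}. First I would unwind the definition of $\Delta_{F,\delta,s}(X)$: by Lemma \ref{lem}, a point $x$ lies in $\Delta_{F,\delta,s}(X)$ precisely when there is an integer $k\geq 1$ with $\sum_{i=0}^{k-1}f(T^{i}x)\in(e^{s-\delta},e^{s+\delta})$ or $\sum_{i=1}^{k}f(T^{-i}x)\in(e^{s-\delta},e^{s+\delta})$. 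I will treat the two sums separately, the argument being symmetric, using part (i) of Proposition \ref{prop} for the forward sums and part (ii) for the backward sums.

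Since the intervals $[K_{n},K_{n+1})$, $n\geq 4$, partition $[K_{4},\infty)$, for $s$ large enough that $e^{s-\delta}\geq K_{4}$ any forward sum falling in $(e^{s-\delta},e^{s+\delta})$ lies in $[K_{n},K_{n+1})$ for some $n$ in the finite set $N(s)=\{n\geq 4:[K_{n},K_{n+1})\cap(e^{s-\delta},e^{s+\delta})\neq\emptyset\}$, which is a block of consecutive integers because $(e^{s-\delta},e^{s+\delta})$ is connected. By Proposition \ref{prop}(i), such an $x$ satisfies $x_{m(n)+1}=l(n)$ with $m(n)=[\log_{2}n]$ and $l(n)=n-2^{m(n)}$. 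Hence the forward part of $\Delta_{F,\delta,s}(X)$ is contained in $\bigcup_{n\in N(s)}\{x:x_{m(n)+1}=l(n)\}$. Since the coordinate $m(n)+1$ is uniform on $\{0,\ldots,2^{m(n)+1}-1\}$ and $l(n)$ is a valid value there, each of these sets has measure $2^{-(m(n)+1)}$, so the forward part has measure at most $\sum_{n\in N(s)}2^{-(m(n)+1)}\leq\sum_{n\in N(s)}n^{-1}$, using $2^{m(n)+1}>n$.

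The heart of the matter is controlling $N(s)$. Because $\log K_{n+1}-\log K_{n}=\log((n+1)!)\to\infty$, the numbers $\log K_{n}$ become arbitrarily sparse, so for $s$ large the interval $(s-\delta,s+\delta)$ of fixed length $2\delta$ contains at most one of them; thus $|N(s)|\leq 2$ eventually. At the same time $\min N(s)\to\infty$ as $s\to\infty$, since $K_{\min N(s)+1}>e^{s-\delta}$. Combining these, the forward part has measure at most $|N(s)|/\min N(s)\to 0$. The backward part is bounded identically via Proposition \ref{prop}(ii), and adding the two contributions yields $\mu(\Delta_{F,\delta,s}(X))\to 0$; since the quantity is nonnegative the $\limsup$ is in fact a limit equal to $0$.

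The main obstacle — really the only nontrivial point — is this sparseness estimate. It is exactly the super-exponential growth rate $K_{n+1}/K_{n}=(n+1)!$ of the values of the ceiling function that simultaneously forces the boundedness of $|N(s)|$ and the divergence of $\min N(s)$, and these two facts together drive the measure to zero. I would expect the bookkeeping over which of the at most two partition blocks actually meets $(e^{s-\delta},e^{s+\delta})$ to be the only place where mild care is required, but no delicate estimate beyond the gap growth $\log((n+1)!)\to\infty$ is needed.
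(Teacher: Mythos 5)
Your proof is correct and takes essentially the same route as the paper's: reduce via Lemma \ref{lem} to the forward and backward return-time sums, use Proposition \ref{prop} to bound the measure of the set where a sum lands in $[K_{n},K_{n+1})$ by $2^{-([\log_{2}n]+1)}$, and observe that for large $s$ the window $(e^{s-\delta},e^{s+\delta})$ meets at most two consecutive intervals $[K_{n},K_{n+1})$ with $n\to\infty$. Your explicit justification of that last step via $\log K_{n+1}-\log K_{n}=\log((n+1)!)\to\infty$ simply fills in a detail the paper asserts without proof.
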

\begin{proof}
By Lemma \ref{lem} we have
\begin{equation}\label{eq}
\mu(\Delta_{F,\delta,s})=\mu\left(\left\{x\in X; \exists
k\in\mathbb{N}, e^{s-\delta}<\sum_{i=0}^{k-1}f(T^{i}x)<
e^{s+\delta}\right\}\bigcup\right.
\end{equation}
\begin{equation*}
\left.\left\{x\in X; \exists k\in\mathbb{N},
e^{s-\delta}<\sum_{i=1}^{k}f(T^{-i}x)< e^{s+\delta}\right\}\right).
\end{equation*}
Proposition \ref{prop} implies that
\begin{equation}\label{eq1}
\mu\left(\left\{x\in X; \exists k\in \mathbb{N}, K_{n}\leq
\sum_{i=0}^{k-1}f(T^{i}x)<K_{n+1}\right\}\right)\leq
\frac{1}{2^{[\log_{2}{n}]+1}},
\end{equation}
\begin{equation}\label{eq2}
\mu\left(\left\{x\in X; \exists k\in \mathbb{N}, K_{n}\leq
\sum_{i=1}^{k}f(T^{-i}x)<K_{n+1}\right\}\right)\leq
\frac{1}{2^{[\log_{2}{n}]+1}}.
\end{equation}

Notice that for $s$ sufficiently large,
$(e^{s-\delta},e^{s+\delta})$ intersects at most two consecutive
intervals $[K_{n},K_{n+1})$. This, together (\ref{eq}), (\ref{eq1})
and (\ref{eq2}) implies (\ref{eq0}).
\end{proof}
From Proposition \ref{PrB} and Proposition 5.3 we can then conclude:
\begin{cor}
The flow $\{F_{t}\}_{t\in\mathbb{R}}$ constructed above satisfies
Property B.
\end{cor}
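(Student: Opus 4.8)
The plan is to invoke Proposition \ref{PrB} directly, taking the base set to be the whole space $X$. Since the flow $\{F_{t}\}_{t\in\mathbb{R}}$ is by construction the flow built under the function $f$ with base automorphism $T$ on $(X,\mathfrak{B},\mu)$, Proposition \ref{PrB} applies with $X_{0}=X$ and $\mu_{0}=\mu$, and reduces the verification of Property B to exhibiting a single measurable set $A_{0}\subseteq X$ of positive measure for which $\limsup_{s\rightarrow\infty}\mu(\Delta_{F,\delta,s}(A_{0}))=0$ for every $\delta>0$.

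First I would choose $A_{0}=X$. This is legitimate because $\mu(X)=1>0$, so $X$ is a measurable set of positive measure. With this choice the required condition becomes precisely $\limsup_{s\rightarrow\infty}\mu(\Delta_{F,\delta,s}(X))=0$ for all $\delta>0$.

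Next I would appeal to Proposition 5.3, which asserts that $\lim_{s\rightarrow\infty}\mu(\Delta_{F,\delta,s}(X))=0$ for every $\delta>0$. Since each quantity $\mu(\Delta_{F,\delta,s}(X))$ is nonnegative and the limit exists and equals $0$, the limit superior also equals $0$. Thus the hypothesis of Proposition \ref{PrB} holds with $A_{0}=X$, and the flow has Property B.

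There is essentially no obstacle at this stage: all of the analytic work has already been carried out in Proposition 5.3, where the measure estimates (\ref{eq1}) and (\ref{eq2}) coming from Proposition \ref{prop}, together with the fact that $(e^{s-\delta},e^{s+\delta})$ meets at most two consecutive intervals $[K_{n},K_{n+1})$ for large $s$, force the decay. The corollary is merely the packaging of that estimate through the characterization in Proposition \ref{PrB}; the only points to check are that $X$ qualifies as a set of positive measure and that convergence of the full limit to $0$ yields vanishing of the limit superior, both of which are immediate.
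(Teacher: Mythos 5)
Your proof is correct and matches the paper's own argument exactly: the paper likewise concludes the corollary by combining Proposition \ref{PrB} (with $A_{0}=X$, which has full measure) and Proposition 5.3, noting that the vanishing limit trivially gives a vanishing $\limsup$. Nothing further is needed.
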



\begin{thebibliography}{}
\bibitem{CW} A. Connes \and E.~J. Woods, `Approximately transitive
flows and ITPFI factors', {\em Ergod. Theory Dyn. Syst. }5(1985)
203--236.




\bibitem{DH} A.~H. Dooley \and T. Hamachi, `Markov odometer actions
not of product type', {\em Ergod. Theory  Dyn. Syst. }23(2003)
813-?829.

\bibitem{FM} J. Feldman \and C.~C. Moore, `Ergodic equivalence
relations, cohomology, and von Neumann algebras I and II', {\em
Trans. Amer. Math. Soc. }234(1977) 289--359.
\bibitem{GH} T. Giordano \and D. Handelman, `Matrix-valued Random
Walks and Variations on Property AT', {\em Munster J. of Math.
}1(2008) 15--72.
\bibitem{HO} T. Hamachi \and M. Osikawa, `Computation of the
associated flows of ITPFI$_{2}$ factors of type III$_{0}$', in
``Geometric methods in operator algebras", Pitman Research Notes
123, Longman, New York, (1986) 196--210.
\bibitem{KW} Y. Katznelson \and B. Weiss, `The Classification of
non-singular actions, revisited',  {\em Ergod. Theory  Dyn. Syst. }
11(1991) 333--348.
\bibitem{K2} W. Krieger, `On the infinite product construction of
non--singular automorphisms of a measure space', {\em Invent. Math.
}15(1972) 144?-163.
\bibitem{K3} W. Krieger, `Erratum to: On the infinite product
construction of non--singular automorphisms of a measure space' {\em
Invent. Math. }15(1972) 144--163.
\bibitem{K4} W. Krieger `On ergodic flows and isomorphisms of factors'
{\em  Math. Ann. }223(1976) 19?-70.
\bibitem{M1} R.-B. Munteanu, `On Constructing ergodic hyperfinite
equivalence relations of non-product type', {\em Canadian Math.
Bull. }6(2013)
 136-147 .
\bibitem{M2} R.-B. Munteanu, `A non-product type non-singular
automorphism which satisfies Krieger's property A', {\em Isr. J.
Math., }190(2012) 307--324
\bibitem{KS} K. Schmidt, `Algebraic Ideas in Ergodic Theory', {\em
CBMS Regional Conference Series in Mathematics Vol. 76,} (Amer.
Math. Soc, 1990)

\end{thebibliography}
\end{document}